\newtheorem{dfn}{Definition}[section]
\newtheorem{thm}[dfn]{Theorem}
\newtheorem{pro}[dfn]{Proposition}
\newtheorem{lem}[dfn]{Lemma}
\theoremstyle{definition}
\newcommand{\co}{\colon}
\newcommand{\mb}{\mathbb}
\newcommand{\ol}{\overline}
\newcommand{\mac}{\mathcal}
\newcommand{\tb}{\textbf}
\newcommand{\ep}{\emph}
\newcommand{\F}{F(X_0,\ldots,X_{n+1})}
\newcommand{\X}{X_1,\ldots,X_{n+1}}
\newcommand{\mca}{\mathcal}
\newcommand{\rmc}{{\rm char}}
\newcommand{\y}{y_1,\ldots,y_n}
\newcommand{\OK}{O_{K_l}}
\newcommand{\K}{K_l}
\newcommand{\kQ}{\kappa(Q)}
\title[Persistence of Galois property across other characteristics]{Persistence of Galois property of hypersurfaces over algebraic integers across other characteristics}
\author{Taro Hayashi}
\author{Kento Otsuka}
\author{Keika Shimahara}
\author{Eito Naruse}
\address{(Taro Hayashi)
Department of Mathematical Sciences,
Ritsumeikan University,
1$-$1$-$1 Nojihigashi, Kusatsu, Shiga, 525$-$8577, Japan}
\email{haya4taro@gmail.com}
\address{(Kento Otsuka)
Graduate School of Mathematical Sciences,
Ritsumeikan University,
1$-$1$-$1 Nojihigashi, Kusatsu, Shiga, 525$-$8577, Japan}
\email{ra0123xe@ed.ritsumei.ac.jp}
\address{(Keika Shimahara)
	Graduate School of Mathematical Sciences,
	Ritsumeikan University,
	1$-$1$-$1 Nojihigashi, Kusatsu, Shiga, 525$-$8577, Japan}
\email{ra0134hp@ed.ritsumei.ac.jp}
\address{(Eito Naruse)
	Graduate School of Mathematical Sciences,
	Ritsumeikan University,
	1$-$1$-$1 Nojihigashi, Kusatsu, Shiga, 525$-$8577, Japan}
\email{rp0140ir@ed.ritsumei.ac.jp}
\date{\today}
\subjclass{Primary 14J70, 12F10; Secondary 14G17}
\keywords{Galois extensions of function fields; hypersurfaces}
\begin{document}
\maketitle
\begin{abstract}
In this paper, we investigate hypersurfaces defined over a ring of algebraic integers, and show that if the projection from a point induces a Galois extension over either a number field or the residue field associated with a prime ideal satisfying certain conditions, then the Galois property persists under reduction modulo the residue field associated with all but finitely many such prime ideals. Furthermore, for quartic hypersurfaces, we provide necessary and sufficient conditions for the Galois group to be given by a projective linear group, depending on the characteristic of the base field.
\end{abstract}
\section{Introduction}
Let \( k \) be a field. We denote the characteristic of \( k \) by \( \mathrm{char}(k) \), and its algebraic closure by \( \overline{k} \).
For an irreducible algebraic variety $Y$ over $k$, let $k(Y)$ be the function field of $Y$.
The automorphism group of $k(Y)$ naturally coincides with the group of birational transformations ${\rm Bir}(Y)$ of $Y$ onto itself ([\ref{bio:rh}, Chapter I. Theorem 4.4]). 

Let $\mb P^m(k)$ be the projective space of dimension $m$ over $k$, and
let $\mac X$ be an irreducible hypersurface of degree $d\geq 4$ in $\mb P^{n+1}(k)$.
Consider a point $P\in \mathbb P^{n+1}(k)$ and the projection $\pi_P\co \mac X\dashrightarrow \mb ^n(k)$ with center $P$. 
Since $\pi_P$ is a dominant rational map,
we have an extension of function fields $\pi_P^{\ast}\co k(\mb P^n(k))\hookrightarrow  k(\mac X)$ with $[k(\mac X):k(\mb P^n(k))]=d-1$ (resp. $d$) if $\mac X$ is smooth at $P$ (resp. $P\not\in \mac X$).
The point $P\in\mathbb P^{n+1}(k)$ is called \ep{\tb{a Galois point}} for $\mac X$ if the extension $k(\mac X)/k(\mb P^n(k))$ is a Galois extension ([\ref{bio:ft14},\ref{bio:h06},\ref{bio:my00},\ref{bio:y01f},\ref{bio:y01g},\ref{bio:y03}]).
If $P\in \mac X$ $($resp. $P\not\in \mac X)$, then we call $P$ \ep{\tb {an inner Galois point}} $($resp. \ep{\tb {an outer Galois point}}$)$. 

Elements of the projective linear group ${\rm PGL}(n+1, k)$ that preserve $\mac X$ induce automorphisms of $\mac X$, and hence they induce automorphisms of $k(\mac X)$.
For a Galois point $P$ for $\mac X$, if the Galois group $G$ of $k(\mac X)/k(\mb P^n(k))$ is induced by a subgroup of ${\rm PGL}(n+1,k)$,
then $P$ is called \ep{\tb{an extendable Galois point}}.
If $\mac X$ is smooth, then all Galois points for $\mac X$ are extendable over $\ol{k}$.
Indeed, for smooth hypersurfaces, any transformation is an automorphism, ex ofcept in the case of smooth quartic surfaces in $\mathbb{P}^3$ ([\ref{bio:acgh},\ref{bio:mm63}]).
Even in this exceptional case, it was shown that all Galois points for smooth quartic surfaces are extendable ([\ref{bio:y01g}]).

Galois points are generalized and studied in the context of non-irreducible curves ([\ref{bio:it24}]).
Moreover, Galois points play significant roles in the study of automorphisms of smooth hypersurfaces ([\ref{bio:f14},\ref{bio:fmt19},\ref{bio:f22},\ref{bio:hmo18},\ref{bio:hmo22},\ref{bio:th21l},\ref{bio:th21d},\ref{bio:kty01},\ref{bio:mo15}]), Galois coverings of projective spaces ([\ref{bio:th23g},\ref{bio:th24a}]), 
the theory of algebraic-geometric codes ([\ref{bio:f22gc}]), and the graph theory ([\ref{bio:fm23}]).
These studies demonstrate that the defining equations for hypersurfaces with Galois points and their associated Galois groups play a crucial role in extending the applications of Galois points to other research areas.

Previous work on Galois points has primarily focused on fixed base fields. In contrast, the interplay between Galois points and variations in the characteristic of the base field remains largely unexplored. In this paper, we study hypersurfaces defined over a ring of algebraic integers, focusing on cases where the projection from a point defines a Galois extension over either a number field or the residue field associated with a prime ideal satisfying certain conditions. Our main goal is to investigate whether the Galois property of such projections is preserved under reduction modulo the residue field associated with all but finitely many such prime ideals. In addition, we explicitly determine the defining equations of hypersurfaces admitting Galois points, as well as the structure of their associated Galois groups. These results provide new insights into how the Galois property behaves under changes in the characteristic of the base field.

Let $R$ be a ring.
For $d\geq 0$, let 
\[R[Y_1,\ldots ,Y_m]_d\] be the free $R$-module of a form of degree $d$ in the variables $Y_1,\ldots, Y_m$.
The form \( F(Y_1, \ldots, Y_m)\in R[Y_1, \ldots, Y_m] \) is said to be \emph{\tb{primitive}} if the ideal in \( R \) generated by all of its coefficients is equal to \( R \) itself.
\begin{dfn}\label{dfn:1.1}
For \( F(Y_1,\ldots, Y_{m}) \in R[Y_1,\ldots, Y_m]_d \), we write
\[
F(Y_1,\ldots, Y_m) = \sum_{i_1+\cdots+i_m = d} a_{i_1,\ldots, i_m} Y_1^{i_1} \cdots Y_m^{i_m}
\]
with \( a_{i_1,\ldots, i_m} \in R \).  
We define
\[
\mathcal{MC}(F(Y_1,\ldots, Y_m)) := \prod_{\substack{i_1+\cdots+i_m = d \\ a_{i_1,\ldots, i_m} \neq 0}} a_{i_1,\ldots, i_m}.
\]
By convention, if \( F(Y_1,\ldots, Y_m) = 0 \), then we set \( \mathcal{MC}(F(Y_1,\ldots, Y_m)) := 1 \).
\end{dfn}
Let 
\( K \) be a number field, and let \( O_{K} \) be the ring of integers of \( K \).
Let \( Q \) be a nonzero prime ideal of \( K \), and set 
\[\kappa(Q) := O_{K}/Q \] to be the residue field. Suppose that \( Q \cap \mathbb{Z} = (p) \) for a prime number \( p \in \mathbb{Z} \). Then \( \kappa(Q) \) is a finite extension of the finite field \( \mathbb{F}_p \).
Let $\F \in O_K[X_0,\ldots, X_{n+1}]_d$ be a form of degree $d$.
Then $\F$ naturally becomes a form of degree $d$ with coefficients in $\kappa(Q)$, via reduction modulo the prime ideal \( Q \subset O_K \).  
Let \[\mac X_0\subset \mb P^{n+1}(K)\] be the hypersurface defined by $\F=0$, and \[\mac X_Q\subset \mb P^{n+1}(\kappa(Q))\] be the hypersurface defined by $\F=0$, now regarded as a polynomial over $\kappa(Q)$.
\begin{dfn}\label{dfn:1.2}
	Let \( F(X_0,\ldots, X_{n+1}) \in O_{K}[X_0,\ldots, X_{n+1}]_d \) be a form of degree \( d \). We write
	\[
	F(X_0,\ldots, X_{n+1}) = \sum_{i=0}^{d} F_i\, X_0^{d-i},
	\]
	where \( F_i \in O_{K}[X_1, \ldots, X_{n+1}]_i \) for each \( i \).
\begin{enumerate}
\item[$(1)$] For each \( i = 2, \ldots, d-1 \), we define the form \( H_{\mathrm{inner};\, i}(F(X_0,\ldots, X_{n+1})) \in O_{K}[X_1,\ldots, X_{n+1}]_{2i - 1} \) of degree \( 2i - 1 \) by
\[
H_{\mathrm{inner};\, i}(F(X_0,\ldots, X_{n+1})) := \sum_{i \geq j \geq 1}
\binom{d - j}{i - j} F_j\, (-F_2)^{i - j} \cdot \left((d - 1)F_1\right)^{j - 1}.
\]
Moreover, we define 
\[h_{\mathrm{inner}}(\F):=\prod_{i=2}^{d-2}\mac{MC}(H_{\mathrm{inner};\, i}(F(X_0,\ldots, X_{n+1})) )\in O_K.\]
\item[$(2)$] For each \( i = 1, \ldots, d-1 \), we define \( H_{\mathrm{outer};\, i}(F) \in O_{K}[X_1,\ldots, X_{n+1}]_i \) by
\[H_{\mathrm{outer};\, i}(\F) := \sum_{i \geq j \geq 0}\binom{d - j}{i - j} F_j\, (-F_1)^{i - j} \cdot \left(dF_0\right)^j.
\]
Moreover, we define 
\[h_{\mathrm{outer}}(\F):=\prod_{i=2}^{d-1}\mac{MC}(H_{\mathrm{outer};\, i}(F(X_0,\ldots, X_{n+1})) )\in O_K.\]
\end{enumerate}
\end{dfn}
\begin{thm}\label{1.4}
Let \( K_l \) be a number field containing a primitive \( l \)-th root \( e_l \) of unity where $l\geq2$.
Let $P:=[1:0:\cdots:0]\in\mb P^{n+1}(\OK)$ be a point, and let \( F(X_0,\ldots, X_{n+1}) \in O_{K_l}[X_0, \ldots, X_{n+1}]_d \) be a primitive form of degree \( d \), and assume that the hypersurface \( \mathcal{X}_0 \subset \mathbb{P}^{n+1}(\K) \) defined by \( \F =0\) is geometrically irreducible.
Let \( W \subset \mathrm{Spec}\, O_{K_l} \) be a nonempty open subset such that the reduction \( \mathcal{X}_Q \) is irreducible for every point \( Q \in W \).
Write
\[
F(X_0, \ldots, X_{n+1}) = \sum_{i = 0}^d F_i X_0^{d - i},
\]
where \( F_i \in O_{K_l}[X_1, \ldots, X_{n+1}]_i \) for \( i = 0, \ldots, d \).
Let \( h_{\mathrm{inner}}(F), h_{\mathrm{outer}}(F) \in O_{K_l} \) be the elements defined in Definition \ref{dfn:1.2}.
Define the closed subsets
\[
Z_{\mathrm{inner}} := V\left((d - 1)\cdot\mathcal{MC}(F_1)\cdot h_{\mathrm{inner}}(\F)\right)\]
and
\[Z_{\mathrm{outer}} := V\left(d \cdot \mathcal{MC}(F_0) \cdot h_{\mathrm{outer}}(\F)\right)
\]
of \( \mathrm{Spec}\, O_{K_l} \).
Define the open subsets
\[
U_{\mathrm{inner}} := W \setminus Z_{\mathrm{inner}} \quad \text{and} \quad U_{\mathrm{outer}} := W \setminus Z_{\mathrm{outer}}.
\]
\begin{enumerate}
\item[$(1)$]If $l=d-1$ and there exists $Q\in U_{\mathrm{inner}}$ such that $\mac X_Q$ is smooth at $P$, and $P$ is an extendable inner Galois point for $\mac X_Q$,
then for any $Q'\in U_{\mathrm{inner}}$,
$\mac X_{Q'}$ is smooth at $P$, and $P$ is an extendable inner Galois point for $\mac{X}_{Q'}$.
In addition, the Galois group of $k(\mathcal{X}_{Q'})/k(\mathbb{P}^n(\kappa(Q')))$ is a cyclic groups of order $d-1$ for any $Q'\in U_{\mathrm{inner}}$.
\item[$(2)$]If $l=d$ and there exists $Q\in U_{\mathrm{outer}}$ such that $P$ is an extendable outer Galois point for $\mac X_Q$,
then for any $Q'\in U_{\mathrm{outer}}$, $P$ is an extendable outer Galois point for $\mac{X}_{Q'}$.
In addition, the Galois group of $k(\mathcal{X}_{Q'})/k(\mathbb{P}^n(\kappa(Q')))$ is a cyclic groups of order $d$ for any $Q'\in U_{\mathrm{outer}}$.
\end{enumerate}
\end{thm}
Theorem \ref{1.4} follows from Theorems \ref{5.2} and \ref{5.3}.

Next, for quartic hypersurfaces, we provide necessary and sufficient conditions for the existence of an extendable Galois point, depending on the characteristic of the base field. 
\begin{thm}\label{1.1}
Let $k$ be an algebraically closed filed, 
let $P:=[1:0:\cdots:0]\in\mb P^{n+1}(k)$ be a point, 
and let $\mac X\subset \mb P^{n+1}(k)$ be an irreducible quartic hypersurface.
We express the defining equation $F(X_0,\ldots, X_{n+1})$ of $\mac X$ as 
\[F(X_0,\ldots, X_{n+1})=\sum_{i=0}^4F_iX_0^{4-i}\]
where $F_i\in k[X_1,\ldots, X_{n+1}]_i$ for $i=0,\ldots,4$.

We assume that $\mac X$ is smooth at $P$.
Then the point $P$ being an extendable Galois point for $\mac X$ is equivalent to the following conditions:
\begin{enumerate}
\item[$(1)$]If $\rmc(k)=3$, then 
\[F_2=0\quad\mathrm{and}\quad F_3=-G^2F_1\]
where $G\in k[\X]$ is a non-zero form $G\in k[\X]_1$.
\item[$(2)$]If $\rmc(k)\neq3$, then 
\[3F_1F_3-F_2^2=0.\]
\end{enumerate}
In both cases,  the Galois group of $k(\mathcal{X})/k(\mathbb{P}^n(k))$ is a cyclic group of order $3$.

We assume that $P\not\in \mac X$. In this case, \( F_0\neq 0 \). For simplicity, we assume that \( F_0= 1 \). 
Then the point $P$ being an extendable Galois point for $\mac X$ is equivalent to the following conditions:
\begin{enumerate}
\item[$(3)$]If $\rmc(k)=2$, then 
\[F_1=0,\]
and for the polynomial $T^3 + F_2T + F_3$ where $T$ is an indeterminate,
there exist forms \( B_i \in k[\X] \) with $B_i\neq0$ for $i=1,2,3$ such that 
\begin{align*}
	\begin{split}
T^3 + F_2T +F_3=(T-B_1)(T-B_2)(T-B_3).
\end{split}
\end{align*}
In this case, the Galois group of $k(\mathcal{X})/k(\mathbb{P}^n(k))$ is the direct product of two cyclic groups of order 2.
\item[$(4)$]If $\rmc(k)\neq2$, then 
\[3F_1^2-8F_2=0\quad\mathrm{and}\quad F_1^3-16F_3=0.\]
In this case,   the Galois group of $k(\mathcal{X})/k(\mathbb{P}^n(k))$ is a cyclic group of order $4$.
\end{enumerate}
\end{thm}
Parts $(2)$ and $(4)$ of Theorem \ref{1.1} is known for quartic normal hypersurfaces in the case where the characteristic of the base field is zero; see, for example, [\ref{bio:ft14}, {\rm Key Lemma}\ 3.1]. Part $(2)$ (resp. $(4)$) of Theorem \ref{1.1} extends this result to the case where the characteristic is not equal to $3$ (resp. $2$).
Theorem \ref{1.1} is followed by Theorems \ref{33}, and \ref{43}.
The distinction in Theorem \ref{1.1} between cases where the characteristic of the base field is $2$ (resp. $3$) and other cases is based on the result that the Galois group differs depending on whether the characteristic $p$ is a prime factor of $d-1$ (resp. $d$) ([\ref{bio:f07},\ref{bio:ft14},\ref{bio:f142}]).

Section 2 is preliminary.
We explain previous research results about Galois points and provide several results that are used in proving Theorems \ref{1.1} and \ref{1.4}.
In Section 3, we show Theorem \ref{1.4}. 
Theorems \ref{5.1} and \ref{5.2} investigate the Galois property for irreducible hypersurfaces of degree $d\geq4$.
However, we do not examine the Galois property in cases where the prime $p$ is a prime factor of either $(d-1)$ or $d$ in these theorems.
This is because in cases where the prime $p$ is a prime factor of either $(d-1)$ or $d$, the explicit form of the defining equations for hypersurfaces with Galois points remains unknown.
In Section 4 (resp. Section 5), we show parts $(1)$ and $(2)$ (resp. $(3)$ and $(4)$) of Theorem \ref{1.1} by utilizing the necessary and sufficient conditions for a degree $3$ (resp. $4$) field extension to be a Galois extension.

\section{Preliminary}
Let $F(X_0,\X)\in k[X_0,\X]$ be a form of degree $d$.
For $A=(a_{ij})\in{\rm GL}(n+2,k)$, 
we define the action of $A$ on $F(X_0,\X)$ as follows:
\[ A^{\ast}F(X_0,\X):=F\Bigl(\sum_{i=1}^{n+2}a_{1i}X_{i-1},\sum_{i=1}^{n+2}a_{2i}X_{i-1},\ldots,\sum_{i=1}^{n+2}a_{n+2\ i}X_{i-1}\Bigr).\]
Let $\mac X\subset \mb P^{n+1}(k)$ be a hypersurface defined by $F(X_0,\X)=0$.
 If there exists $t \in k$ such that
\[A^{\ast}F(X_0,\X)=tF(X_0,\X),\] then $A$ induces an automorphism of $\mac X$, which we denote by $g_A$.
Let $\pi_P\colon \mac X\dashrightarrow \mathbb P^n(k)$ be the projection with center $P\in \mathbb P^{n+1}(k)$.
We assume that $P=[1:0:\cdots:0]$.
Then the map $\pi_P$ is explicitly given by
\[\pi_P\colon \mac X\backslash\{P\}\ni[X_0:X_1:\cdots:X_n:X_{n+1}]\mapsto[X_1:\cdots:X_{n+1}]\in\mathbb P^n(k).\]
Since the automorphism group of $k(\mac X)$ naturally coincides with ${\rm Bir}(\mac X)$,
we get that $P$ is a Galois point for $\mac X$ if and only if $\pi_P\co \mac X\dashrightarrow \mathbb P^n(k)$ is a Galois rational cover.
Define the group
\[
G_{\pi_P} := \{ g \in \mathrm{Bir}(\mathcal{X}) \mid \pi_P \circ g = \pi_P \}
\]
which is a subgroup of $\mathrm{Bir}(\mathcal{X})$. The following results hold:
\begin{enumerate}
\item If $P$ is a Galois point for $\mathcal{X}$, then $G_{\pi_P}$ is the Galois group of $\pi_P$.
\item If $\mathcal{X}$ is smooth at $P$, then the degree of $\pi_P$ is $d-1$. In this case, $P$ is a Galois point for $\mathcal{X}$ if and only if $|G_{\pi_P}| = d-1$.
	\item If $P \notin \mathcal{X}$, then the degree of $\pi_P$ is $d$. In this case, $P$ is a Galois point for $\mathcal{X}$ if and only if $|G_{\pi_P}| = d$.
	\item If $P$ is a Galois point for $\mac X$, then $P$ is an extendable if and only if $G_{\pi_P}$ is generated by automorphisms which are induced by matrices.
\end{enumerate}
We introduce necessary and sufficient conditions for a hypersurface to admit a Galois point when the characteristic of the base field is either $0$ or coprime to $d-1$ or $d$.
Here, $I_m$ is the identity matrix of size $m\geq 1$.
\begin{thm}\label{2.1}$([\ref{bio:ft14},\ref{bio:y01g},\ref{bio:y03}])$.
Let $k$ be an algebraically closed field of $\rmc(k)=0$, 
let $\mac X$ be a normal hypersurface of degree $d\geq 4$ in $\mathbb P^{n+1}(k)$, and let $P\in\mathbb P^{n+1}(k)$ be a point.
Then we have the following:
	\begin{enumerate}
		\item[$(1)$]We assume that $\mac X$ is smooth at $P$.
	The point $P$ is an inner Galois point of $X$ if and only if by replacing the coordinate system if necessary, $P=[1:0:\cdots:0]$ and  $\mac X$ is defined by
		\[ X_1X_0^{d-1}+F_d(\X)=0\]
where $F_d(\X)\in k[\X]_d$.
In addition, the Galois group $G_{\pi_P}$ is generated by an automorphism of $\mac X$ induced by the following matrix:
		\[\begin{pmatrix}
			e_{d-1}&0\\
			0&I_{n+1}
		\end{pmatrix}
		\]
where $e_{d-1}$ is a primitive $d-1$-th root of unity.
\item[$(2)$]We assume that $P\not\in \mac X$.
		The point $P$ is an outer Galois point of $\mac X$ if and only if by replacing the coordinate system if necessary, $P=[1:0:\cdots:0]$ and $\mac X$ is defined by
		\[ X_0^d+F_d(\X)=0\]
where $F_d(\X)\in k[\X]_d$.		
	In addition, the Galois group $G_{\pi_P}$ is generated by an automorphism of $\mac X$ induced by the following matrix:
		\[\begin{pmatrix}
			e_d&0\\
			0&I_{n+1}
\end{pmatrix}\]
where $e_d$ is a primitive $d$-th root of unity.
\end{enumerate}
\end{thm}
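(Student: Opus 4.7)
My plan is to establish both directions, using Kummer theory in characteristic zero to translate between normal forms for $F$ and the structure of the Galois group $G_{\pi_P}$.

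For sufficiency in part $(1)$, I dehomogenize by $X_{n+1}$ and set $x_i := X_i/X_{n+1}$. The equation $X_1 X_0^{d-1} + F_d(\X) = 0$ becomes $x_0^{d-1} = -F_d(x_1,\ldots,x_n,1)/x_1$ in $k(\mac X)$, so $x_0^{d-1}$ lies in $k(\mb P^n_k)^\times$. Since $\rmc(k) = 0$ and $k$ contains all $(d-1)$-th roots of unity, the extension $k(\mac X) / k(\mb P^n_k)$ is Kummer of order dividing $d-1$; the known degree $d-1$ forces it to be cyclic Galois of order exactly $d-1$, generated by $x_0 \mapsto e_{d-1}x_0$. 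This generator is induced by the displayed matrix, so $P$ is automatically extendable. Part $(2)$ is identical with $x_0^d = -F_d(x_1,\ldots,x_n,1)$ in place of the Kummer relation.

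For necessity, I first determine the group of elements of ${\rm PGL}(n+2,k)$ that fix $\pi_P$: up to scalar these are the matrices $\begin{pmatrix} a & \vec b \\ \vec 0 & I_{n+1}\end{pmatrix}$ with $a \in k^\times$, $\vec b \in k^{n+1}$, forming $H \cong k^\times \ltimes k^{n+1}$. Extendability (a fact proved for normal hypersurfaces of degree $\geq 4$ in the cited references) puts $G_{\pi_P}$ into $H$ as a finite subgroup of order $d-1$ or $d$. Because $\rmc(k)=0$, the unipotent radical $k^{n+1} \subset H$ is torsion-free, so $G_{\pi_P}$ injects into $k^\times$ via $H \to k^\times$ and is cyclic; if a generator has the above matrix form with $a \neq 1$, then conjugation by $\begin{pmatrix} 1 & \vec b/(a-1) \\ \vec 0 & I_{n+1}\end{pmatrix} \in H$ (which still preserves $\pi_P$) diagonalizes it. After this change of the $X_0$-coordinate, $G_{\pi_P}$ is generated by $g = {\rm diag}(\zeta,1,\ldots,1)$ for a primitive root of unity $\zeta$ of order $d-1$ in the inner case and $d$ in the outer case.

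The invariance condition $g^{\ast}F = tF$ now reads $\zeta^{d-i} F_i = t F_i$ for each $i$, surviving only those indices with $\zeta^{d-i} = t$. In the inner case, $P \in \mac X$ gives $F_0 = F(P) = 0$, while smoothness of $\mac X$ at $P$ forces $F_1 \neq 0$ (since $\partial F/\partial X_j(P) = \partial F_1/\partial X_j(0)$ for $j\ge 1$, and $\partial F/\partial X_0(P) = dF_0 = 0$); so $t = \zeta^{d-1} = 1$, and inside $\{0,\ldots,d\}$ the surviving indices $i \equiv 1 \pmod{d-1}$ are exactly $i=1$ and $i=d$, giving $F = F_1 X_0^{d-1} + F_d$. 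A further linear change in $X_1,\ldots,X_{n+1}$ (which fixes $P$ and commutes with the projection up to reparametrizing the target) sends the nonzero linear form $F_1$ to $X_1$, producing the stated equation. In the outer case $F_0 \neq 0$ forces $t = 1$ and the surviving indices to be $i \in \{0, d\}$; rescaling normalizes $F_0 = 1$. The principal obstacle is the extendability step, namely the reduction $G_{\pi_P} \subset {\rm Bir}(\mac X)$ to $G_{\pi_P} \subset H \subset {\rm PGL}(n+2,k)$. For smooth hypersurfaces this follows from Matsumura--Monsky, but the normal case requires the more delicate analysis of the cited literature; once that step is granted, everything else is a linear-algebra and Kummer-theoretic consequence.
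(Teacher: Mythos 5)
Your argument is correct in substance, and it is worth noting that the paper itself offers no proof of Theorem \ref{2.1}: the statement is quoted from [\ref{bio:ft14},\ref{bio:y01g},\ref{bio:y03}], and the only argument of this type actually written out in the paper is the proof of the closely related Proposition \ref{2.6}, which treats \emph{extendable} Galois points in characteristic coprime to $d-1$ (resp.\ $d$). Your necessity direction is essentially that proof: you identify the stabilizer of $\pi_P$ as the group $H$ of matrices $\begin{pmatrix} a & \vec b \\ 0 & I_{n+1}\end{pmatrix}$, use ${\rm char}(k)=0$ to see that a finite subgroup meets the unipotent part trivially and is therefore cyclic with diagonalizable generator, and then read off which $F_i$ survive the eigenvalue condition $\zeta^{d-i}F_i = tF_i$ — all of which mirrors the paper's computation with the matrix $S$, the contradiction ruling out $s_{11}=1$, and the conjugation $ASA^{-1}$. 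Your sufficiency direction is genuinely different and arguably cleaner: instead of exhibiting the order-$(d-1)$ (resp.\ order-$d$) automorphism and counting $|G_{\pi_P}|$, you observe that $x_0^{d-1}$ (resp.\ $x_0^d$) lies in $k(\mathbb P^n_k)$ and invoke Kummer theory; this buys you the Galois property and the cyclic group in one stroke, at the cost of being tied to characteristic zero (the paper's direct construction survives verbatim whenever ${\rm char}(k)$ is coprime to $d-1$ or $d$, which is why Proposition \ref{2.6} is stated in that generality). You are also right to flag the reduction from $G_{\pi_P}\subset{\rm Bir}(\mac X)$ to $G_{\pi_P}\subset{\rm PGL}(n+2,k)$ as the one step that cannot be done by linear algebra for merely normal hypersurfaces; that extendability is precisely the content the paper outsources to the cited references, so deferring it is consistent with how the theorem is used here, but it does mean your write-up, like the paper's, is a proof of the theorem only modulo that input.
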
 
\begin{thm}\label{2.2}$([\ref{bio:f08},\ {\rm Proposition}\ 1\ {\rm and}\ {\rm Proposition}\ 3])$.
	Let $k$ be an algebraically closed field of $\rmc(k)\geq3$,
let $\mac X$ be a smooth curve of degree $d\geq 4$ in $\mathbb P^2(k)$, and let $P\in\mathbb P^2(k)$ be a point.
Then we have the following:
\begin{enumerate}
\item[$(1))$]We assume that $\rmc(k)$ is not a prime factor of $d-1$.
		The point $P$ is an inner Galois point of $\mac X$ if and only if by replacing the coordinate system if necessary, $P=[1:0:0]$ and  $\mac X$ is defined by
		\[ X_1X_0^{d-1}+F_d(X_1,X_2)=0\]
		where $F_d(X_1,X_2)\in k[X_1,X_2]_d$.
		In addition, the Galois group $G_{\pi_P}$ is generated by an automorphism of $\mac X$ induced by the following matrix:
		\[\begin{pmatrix}
			e_{d-1}&0\\
			0&I_2
		\end{pmatrix}
		\]
		where $e_{d-1}$ is a primitive $d-1$-th root of unity.
\item[$(2)$]We assume that $\rmc(k)$ is not a prime factor of $d$.
The point $P$ is an outer Galois point of $\mac X$ if and only if
		by replacing the coordinate system if necessary, $P=[1:0:0]$ and $X$ is defined by
		\[ X_0^d+F_d(X_1,X_2)=0\]
		where $F_d(X_1,X_2)\in k[X_1,X_2]_d$.		
		In addition, the Galois group $G_{\pi_P}$ is generated by an automorphism of $\mac X$ induced by the following matrix:
		\[\begin{pmatrix}
			e_d&0\\
			0&I_2
		\end{pmatrix}
		\]
where $e_d$ is a primitive $d$-th root of unity.
\end{enumerate}
\end{thm}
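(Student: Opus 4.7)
The plan is to prove both parts by normalizing the defining equation through coordinate changes and then diagonalizing a generator of the Galois group. Placing $P = [1:0:0]$ after a projective change of coordinates and expanding $F(X_0, X_1, X_2) = \sum_{i=0}^{d} F_i(X_1, X_2) X_0^{d-i}$ with $\deg F_i = i$, part (1) gives $F_0 = 0$ (as $P \in \mac X$) and $F_1 \neq 0$ (by smoothness at $P$), so after a linear change in $(X_1, X_2)$ I may take $F_1 = X_1$; part (2) gives $F_0$ a nonzero constant that normalizes to $1$.

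For sufficiency, given $\mac X$ in the stated form, I would verify directly that the matrix $\mathrm{diag}(e_{d-1}, 1, 1)$ in (1) (resp.\ $\mathrm{diag}(e_d, 1, 1)$ in (2)) fixes the equation and commutes with $\pi_P$, hence induces an element of $G_{\pi_P}$ of order $d-1$ (resp.\ $d$). By the criteria stated before Theorem \ref{2.1}, $|G_{\pi_P}|$ then attains the degree of $\pi_P$, so $P$ is a Galois point and the cyclic group generated by this matrix is all of $G_{\pi_P}$.

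For necessity, since $\mac X$ is a smooth plane curve, every Galois point is extendable, so each $g \in G_{\pi_P}$ lifts to a matrix in $\mathrm{PGL}(3, k)$ that fixes $P$ and acts trivially on the target $\mb P^1$; after projective rescaling such a matrix has the form
\[
A = \begin{pmatrix} a & b_1 & b_2 \\ 0 & 1 & 0 \\ 0 & 0 & 1 \end{pmatrix},
\]
and these matrices form a semidirect product $k^{\times} \ltimes k^2$. The projection $A \mapsto a$ sends $G_{\pi_P}$ into $k^{\times}$; its kernel is a finite subgroup of the additive group $k^2$, so every nontrivial element has order $\rmc(k)$. Since $|G_{\pi_P}| = d - 1$ (resp.\ $d$) is coprime to $\rmc(k)$ by hypothesis, the kernel is trivial and $G_{\pi_P}$ embeds into $k^{\times}$, hence is cyclic of the stated order, generated by some $\sigma$ whose $a$-entry $\lambda$ is a primitive $(d-1)$-th (resp.\ $d$-th) root of unity in $k$ (existing precisely because $\rmc(k) \nmid d - 1$, resp.\ $d$). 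A substitution $X_0 \mapsto X_0 - \frac{b_1}{\lambda - 1} X_1 - \frac{b_2}{\lambda - 1} X_2$, which fixes both $P$ and $\pi_P$, conjugates $\sigma$ to $\mathrm{diag}(\lambda, 1, 1)$. Then $\sigma^{\ast} F = tF$, compared degree by degree in $X_0$, yields $\lambda^{d-i} F_i = t F_i$ for each $i$: in (1), $t = \lambda^{d-1}$ forces $F_i = 0$ unless $i \in \{1, d\}$, giving $F = X_1 X_0^{d-1} + F_d$; in (2), $t = \lambda^d$ forces $F_i = 0$ unless $i \in \{0, d\}$, giving $F = X_0^d + F_d$.

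The main obstacle is establishing cyclicity of $G_{\pi_P}$ with a diagonalizable generator, that is, ruling out unipotent elements; the tame regime $\rmc(k) \nmid d - 1$ (resp.\ $d$) is precisely what makes this possible, because the unipotent matrices in the stabilizer of $(P, \pi_P)$ all have order $\rmc(k)$. Without this hypothesis, $G_{\pi_P}$ could contain such matrices and the clean vanishing of the intermediate $F_i$ would fail, which is why the cases in which $\rmc(k)$ divides $d-1$ or $d$ are excluded from Theorem \ref{2.2}.
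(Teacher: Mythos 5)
Your proposal is correct and follows essentially the same route as the paper: Theorem \ref{2.2} itself is quoted from Fukasawa, and the paper's proof of its higher-dimensional generalization (Proposition \ref{2.6}) uses exactly your strategy — exhibit the diagonal automorphism for sufficiency, and for necessity represent each element of $G_{\pi_P}$ by an affine-triangular matrix fixing $P$ and acting trivially on the target, use the hypothesis that $\rmc(k)$ does not divide $d-1$ (resp. $d$) to exclude unipotent elements, diagonalize the generator, and compare coefficients degree by degree in $X_0$. Your packaging of the unipotent-exclusion step via the kernel of the projection $k^{\times}\ltimes k^2\to k^{\times}$ is a slightly cleaner version of the paper's computation with the powers $S^n$, but it is the same idea.
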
 
When the characteristic of the base field is a prime divisor of \(d-1\) or \(d\), the defining equation of a hypersurface possessing a Galois point has not been determined under general conditions, to the best of the author's knowledge. 
As related results to our main theorem, we introduce two specific cases: the case of inner Galois points for nonsingular plane quartic curves in characteristic \(3\), and the case of extendable outer Galois points when the characteristic \(p\) of the base field satisfies \(d = p^l\) for \(l \geq 1\).
\begin{thm}\label{2.3}$([\ref{bio:f06},\ {\rm Proposition}\ 1])$.
Let $k$ be an algebraically closed field of $\rmc(k)=3$, and let $[X:Y:Z]$ be the coordinate system of $\mb P^2(k)$.
	A smooth quartic curve $C\subset\mb P^2_k$ has an inner Galois point if and only if $C$ is
	projectively equivalent to the curve defined by either of the following two forms:
\begin{enumerate}
\item[$(1)$]$X^{3}Z-XZ^{3}+Y^{4}+a_{3}Y^{3}Z+a_{2}Y^{2}Z^{2}+a_{1}YZ^{3}$,
\item[$(2)$]$X^{3}Y-XYZ^{2}+a_{3}Y^{3}Z+a_{2}Y^{2}Z^{2}+a_{1}YZ^{3}+Z^{4}$,
\end{enumerate}
where $a_{i}(i=1,2,3)$ are constants and in the case $(2),$ $a_{3}\neq 0$ . Furthermore, the
dual map of $C$ is inseparable onto its dual if and only if $C$ is in the case $(1)$ with
$a_{2}=0$.
\end{thm}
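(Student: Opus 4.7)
The strategy is to exploit the equality $d - 1 = 3 = \rmc(k)$: since a purely inseparable extension of degree $p$ in characteristic $p$ has trivial automorphism group, any Galois extension of degree $3$ in characteristic $3$ must be separable and cyclic, hence an Artin--Schreier extension. I would use this structural constraint to pin down the defining equation of $C$ after placing $P = [1:0:0]$.

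Write
\[
F = F_{1}(Y,Z) X^{3} + F_{2}(Y,Z) X^{2} + F_{3}(Y,Z) X + F_{4}(Y,Z),
\]
with $F_{i}$ a form of degree $i$; smoothness of $C$ at $P$ forces $F_{1} \neq 0$. Because $C$ is smooth, $P$ is extendable, so the Galois group is generated by a matrix that fixes $P$ and descends to the identity on $\mathbb{P}^{1}_{k}$; a short computation shows such a matrix must send $X \mapsto X + bY + cZ$ with $(b,c) \neq (0,0)$, and its order is $3$ automatically. Setting $u := X/Z$ and $\lambda := bY/Z + c$, the three roots of the degree-$3$ cubic in $u$ over $k(Y/Z)$ are $u$, $u+\lambda$, $u+2\lambda$. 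Applying Vieta's formulas in characteristic $3$: the sum of the roots is $3u+3\lambda = 0$, forcing $F_{2} \equiv 0$; the second elementary symmetric function equals $-\lambda^{2}$, yielding the identity
\[
F_{3} = -(bY + cZ)^{2}\, F_{1}.
\]
Hence $F_{3}$ is $-F_{1}$ times the square of a linear form $L := bY + cZ$.

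The argument now splits on whether $L$ and $F_{1}$ are proportional. In the parallel case, a change of $Y,Z$ coordinates fixing $P$ puts $F_{1} = L = Z$ and, after rescaling $X$, the equation becomes $F = X^{3}Z - XZ^{3} + F_{4}$. The substitution $X \mapsto X + tZ$ modifies $F_{4}$ by $(t^{3}-t)Z^{4}$, and surjectivity of the Artin--Schreier map $t \mapsto t^{3}-t$ on the algebraically closed $k$ lets me kill the $Z^{4}$-coefficient; irreducibility forces the $Y^{4}$-coefficient to be nonzero, and rescaling $Y$ normalizes it to $1$, producing form $(1)$. In the independent case, coordinates can be chosen so that $F_{1} = Y$ and $L = Z$, giving $F = YX^{3} - XYZ^{2} + F_{4}$. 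The substitutions $X \mapsto X + sY$ (modifying $F_{4}$ by $s^{3}Y^{4} - sY^{2}Z^{2}$) and $X \mapsto X + tZ$ (modifying it by $(t^{3}-t)YZ^{3}$) kill the $Y^{4}$-coefficient; irreducibility forces the $Z^{4}$-coefficient to be nonzero, and rescaling $Y$ normalizes it to $1$. A singular-point analysis at $[0:1:0]$ then shows smoothness of $C$ requires the $Y^{3}Z$-coefficient to be nonzero, producing form $(2)$ with $a_{3} \neq 0$.

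For the converse, dehomogenizing each normal form at $Z = 1$ yields a cubic in $u = X/Z$ of Artin--Schreier shape $u^{3} - u = g(Y/Z)$, so $k(C)/k(Y/Z)$ is Galois cyclic of order $3$ with generator realized by the matrix $X \mapsto X + Z$; hence $P$ is an extendable inner Galois point. For the inseparability assertion on the dual map, I would compute the triple $(\partial_{X}F,\partial_{Y}F,\partial_{Z}F)$ and check when the Gauss map $C \dashrightarrow (\mathbb{P}^{2}_{k})^{\ast}$ factors through the Frobenius of $C$, which in characteristic $3$ amounts to each partial being a cube of a linear form (i.e.\ a $k$-linear combination of $X^{3}, Y^{3}, Z^{3}$). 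In case $(1)$ the partials reduce to $(-Z^{3},\, Y^{3} - a_{2}YZ^{2} + a_{1}Z^{3},\, X^{3} + a_{3}Y^{3} - a_{2}Y^{2}Z)$, which are all cubes of linear forms exactly when $a_{2} = 0$; in case $(2)$ the partial $\partial_{X}F = -YZ^{2}$ is not a cube, so the Gauss map is separable. The main technical obstacle I anticipate is the normalization in the independent case --- simultaneously zeroing the $Y^{4}$-coefficient, rescaling the $Z^{4}$-coefficient to $1$, and verifying that $a_{3} \neq 0$ is preserved under all residual scalings.
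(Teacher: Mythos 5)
The paper does not actually prove Theorem \ref{2.3}: it is quoted verbatim from [\ref{bio:f06}, Proposition 1] as background, so there is no internal proof to compare against. Measured against the machinery the paper does develop, your classification argument is essentially the same computation specialized to $n=1$: the reduction to a generator $X\mapsto X+bY+cZ$ (forced because $s_{11}^{3}=1$ gives $s_{11}=1$ in characteristic $3$) and the Vieta identities $F_2=0$, $F_3=-(bY+cZ)^2F_1$ are exactly Lemma \ref{3.2} and part $(1)$ of Theorem \ref{33}. Your case split on whether $F_1$ and $bY+cZ$ are proportional, the use of surjectivity of $t\mapsto t^3-t$ to kill the $Z^4$ (resp.\ $Y^4$) coefficient, the irreducibility argument for the nonvanishing of the remaining top coefficient, and the smoothness check at $[0:1:0]$ forcing $a_3\neq0$ all check out, as does the converse via the Artin--Schreier shape of the dehomogenized equations.

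The one genuine soft spot is the dual-map assertion. Your criterion --- that inseparability of the Gauss map ``amounts to each partial being a cube of a linear form'' --- is only valid in one direction. Inseparability is a statement about the extension $\gamma^{*}k(C^{*})\subset k(C)$, i.e.\ about whether the ratios $\partial_YF/\partial_XF$ and $\partial_ZF/\partial_XF$, restricted to $C$, lie in $k(C)^3$; the partials individually being cubes in $k[X,Y,Z]$ is sufficient for this but not a priori necessary, and conversely the observation that a single partial (e.g.\ $\partial_XF=-YZ^2$ in case $(2)$) fails to be a cube does not by itself yield separability. To close this you need a positive separability argument: in case $(1)$ with $a_2\neq0$ the ratio $\partial_YF/\partial_XF=-(y^3-a_2y+a_1)$ generates a subfield of $\gamma^{*}k(C^{*})$ over which $k(y)$ is separable (the minimal polynomial $T^3-a_2T+(a_1-c)$ has derivative $-a_2\neq0$), and $k(C)/k(y)$ is separable because the defining cubic $x^3-x+(\cdots)$ has derivative $-1$; hence $k(C)/\gamma^{*}k(C^{*})$ is separable. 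An analogous computation with $\partial_YF/\partial_XF=a_3y+2a_2+y^{-2}$, whose $y$-derivative $a_3+y^{-3}$ is nonzero, settles case $(2)$. With that repair the proposal is complete and follows the route of the cited source.
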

\begin{thm}\label{2.4}$([\ref{bio:f142},\ {\rm Proposition}\ 1])$.
Let $k$ be an algebraically closed field of characteristic $p>0$, and let $[X:Y:Z]$ be the coordinate system of $\mb P^2(k)$.
Let $C \subset \mathbb{P}^2(k)$ be an irreducible curve of degree $p^e$. Assume that $P =[1 : 0 : 0]\in\mb P^2(k)$ is an extendable outer Galois point, and let 
	\[
	g_0(x, y) = \prod_{\tilde{\sigma} \in \tilde{G}_P} \tilde{\sigma}^* x = \prod_{\sigma \in G_P} (x + a_{12}(\sigma)y + a_{13}(\sigma)),
	\]
where $x:=\frac{X}{Z}$ and $y:=\frac{Y}{Z}$.

Then, we have the following:
\begin{enumerate}
\item[$(1)$] $G_{\pi_P} \cong (\mathbb{Z}/p\mathbb{Z})^e$.
\item[$(2)$] $g_0(x, y) \in K[y][x]$ has only terms of degree a power of $p$ in the variable $x$.
\item[$(3)$] The defining equation of $C$ on the affine space $Z\neq0$ is of the form $g_0(x, y) + h(y) = 0$.
\end{enumerate}
\end{thm}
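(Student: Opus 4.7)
The plan is to translate the hypothesis of $P$ being an extendable outer Galois point into explicit matrix form, then invoke the classical theory of additive (linearized) polynomials over a field of characteristic $p$.

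For $(1)$, I would first observe that every $\sigma \in G_{\pi_P}$ lifts, by extendability, to a matrix in $\mathrm{GL}(3,k)$ preserving the projection $\pi_P\colon[X:Y:Z]\mapsto[Y:Z]$, hence whose lower-right $2\times 2$ block is a scalar. After rescaling in $\mathrm{PGL}(3,k)$, this lift takes the form
\[
\tilde\sigma = \begin{pmatrix} a_{11}(\sigma) & a_{12}(\sigma) & a_{13}(\sigma) \\ 0 & 1 & 0 \\ 0 & 0 & 1 \end{pmatrix}.
\]
Multiplying two such matrices shows that $\sigma\mapsto a_{11}(\sigma)$ is a character $G_{\pi_P}\to k^\ast$. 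Because $|G_{\pi_P}|=\deg C=p^e$ and $k^\ast$ has no nontrivial $p$-torsion in characteristic $p$ (from $\lambda^{p^n}=1\Rightarrow(\lambda-1)^{p^n}=0$), this character is trivial, so $a_{11}(\sigma)=1$ for every $\sigma$. A further computation of matrix products then shows that $\sigma\mapsto(a_{12}(\sigma),a_{13}(\sigma))$ is an \emph{injective} group homomorphism into the additive group $(k,+)^{\oplus 2}$. Being a finite subgroup of an $\mathbb F_p$-vector space, its image is elementary abelian, and a cardinality count yields $G_{\pi_P}\cong(\mathbb Z/p\mathbb Z)^e$.

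For $(2)$, set $V:=\{a_{12}(\sigma)y+a_{13}(\sigma):\sigma\in G_{\pi_P}\}\subset k[y]$, which by $(1)$ is a finite additive subgroup of order $p^e$. Since $-V=V$, we can rewrite
\[
g_0(x,y)=\prod_{v\in V}(x+v)=\prod_{v\in V}(x-v).
\]
I would invoke the classical theorem on linearized polynomials: for any finite additive subgroup $W$ of size $p^e$ in a characteristic-$p$ field, $\prod_{w\in W}(T-w)$ is a separable additive polynomial, of shape $\sum_{i=0}^{e}c_iT^{p^i}$. Applied to $W=V\subset k(y)$, this produces $g_0(x,y)=\sum_{i=0}^e c_i(y)\,x^{p^i}$ with $c_i(y)\in k[y]$, which is precisely the content of $(2)$.

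For $(3)$, let $f(x,y)\in k[x,y]$ denote the dehomogenization at $Z\neq 0$ of the defining form of $C$. Since $P\notin C$, the coefficient of $x^{p^e}$ is nonzero, and I normalize $f$ monic in $x$ of degree $p^e$. Because $C$ is irreducible, Gauss's lemma implies $f(T,y)$ is irreducible in $k(y)[T]$, and counting degrees gives that $f(T,y)$ is the minimal polynomial of $x$ over $k(y)$ with $k(y)(x)=k(C)$. Its roots in $k(C)$ are exactly the Galois conjugates $\sigma(x)=x+v_\sigma$, so
\[
f(T,y)=\prod_{v\in V}\bigl((T-x)-v\bigr)=g_0(T-x,\,y).
\]
The additivity of $g_0$ in the first argument established in $(2)$ yields $g_0(T-x,y)=g_0(T,y)-g_0(x,y)$; specializing at $T=0$ identifies $h(y):=f(0,y)=-g_0(x,y)\in k[y]$, and substituting back gives $f(T,y)=g_0(T,y)+h(y)$, which is the shape asserted in $(3)$. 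The principal technical input is the linearized-polynomial theorem invoked in $(2)$; it is the bridge that converts the elementary-abelian structure of $G_{\pi_P}$ obtained in $(1)$ into the rigid combinatorial form of the affine equation in $(3)$, while everything else is matrix manipulation and Galois-theoretic bookkeeping.
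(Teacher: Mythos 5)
Your argument is correct, but note that the paper does not prove this statement at all: Theorem \ref{2.4} is quoted verbatim from Fukasawa's paper ([\ref{bio:f142}, Proposition 1]) as background, so there is no internal proof to compare against. Your reconstruction follows the standard (and essentially Fukasawa's original) line: normalize the lifted matrices so that $\pi_P\circ g=\pi_P$ forces the shape $\bigl(\begin{smallmatrix}a_{11}&\ast&\ast\\0&1&0\\0&0&1\end{smallmatrix}\bigr)$, kill $a_{11}$ using the absence of $p$-torsion in $k^{\ast}$, identify $G_{\pi_P}$ with a finite $\mathbb F_p$-subspace of $(k,+)^{\oplus2}$ to get $(1)$, and then invoke the linearized-polynomial theorem $\prod_{w\in W}(T-w)=\sum_i c_iT^{p^i}$ for $(2)$ and the factorization of the minimal polynomial of $x$ over $k(y)$ into Galois conjugates for $(3)$. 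The only points worth tightening are (i) the step forcing the lower-right block to be scalar uses that an irreducible curve of degree $p^e\geq 3$ cannot lie on a conic, so the degenerate low-degree cases should be excluded or handled separately, and (ii) extendability as defined in this paper only guarantees that \emph{generators} of $G_{\pi_P}$ are induced by matrices, so you should remark that the linearly induced elements form a subgroup, whence all of $G_{\pi_P}$ is linear. Neither issue affects the substance of the argument.
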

These theorems are algebraic in nature, relying on coordinate transformations in projective spaces.  
In $[\ref{bio:th23r}]$, it is showed that the Galois property of a given point $P$ can also be determined geometrically by examining the branching divisors and ramification indices of the projection $\pi_P$.
It should also be noted that when the characteristic of the base field is a prime factor of \( d-1 \) or \( d \), the phenomenon occurs that the Galois group $G_{\pi_P}$ of a Galois point \( P \) is not necessarily a cyclic group, nor even an abelian group.
\begin{thm}\label{2.5}$([\ref{bio:ft14}])$.
Let $k$ be an algebraically closed field of characteristic $p>0$,
and	let $\mac X \subset \mb P^{n+1}(k)$ be a normal hypersurface of degree $d \geq 3$.
	Let $P \in \mb P^{n+1}(k)$ be an inner $($resp. outer$)$ Galois point for $\mac X$, and let $d - 1 = p^e l$ $($resp. $d = p^e l)$, where $l$ is not divisible by $p$.
	Then $l$ divides $p^e - 1$ and 
	\[
	G_{\pi_P}\cong (\mathbb{Z}/p\mathbb{Z})^{\oplus e} \rtimes \mathbb{Z}/l\mathbb{Z}.
	\]
\end{thm}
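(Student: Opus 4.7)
The plan is to realize the Galois group $G_{\pi_P}$ as the inertia group of a totally ramified place of the function-field extension $k(\mac X)/k(\mb P^n)$ induced by $\pi_P$, and then to invoke the classical structure theorem for inertia groups of Galois extensions of complete discrete valuation rings in characteristic $p>0$. I treat the inner case, where $N:=d-1=p^el$; the outer case is parallel with $N:=d=p^el$.

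The first step is to exhibit a codimension-one subvariety of $\mb P^n$ over which $\pi_P$ is totally ramified. In the inner case, the natural candidate is the hyperplane $H\subset\mb P^n$ obtained as the image of the tangent hyperplane $T_P\mac X\subset\mb P^{n+1}$ under the projection from $P$. The fiber of $\pi_P$ over the generic point of $H$ is controlled by the intersection divisor $T_P\mac X\cap\mac X$. Because the Galois action on any fiber is transitive and forces all ramification indices in that fiber to be equal, comparing the local multiplicity at $P$ on $T_P\mac X\cap\mac X$ with the multiplicities at other intersection points, combined with a Riemann--Hurwitz-type count, forces the existence of a prime $\mathfrak{P}$ of $k(\mac X)$ above some codimension-one point of $\mb P^n$ at which the inertia group is the full $G_{\pi_P}$.

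Once total ramification is in hand, one applies the local structure theorem: if $L/K$ is a Galois extension of complete discrete valuation rings with algebraically closed residue field of characteristic $p>0$, totally ramified of degree $p^el$ with $\gcd(l,p)=1$, then $\mathrm{Gal}(L/K)=W\rtimes T$ where $W$ is the normal wild inertia subgroup (a $p$-Sylow of order $p^e$) and $T\cong\mb Z/l\mb Z$ is the tame quotient, embedded in $k^\times$ through the tame character. The higher ramification filtration yields injections $G_i/G_{i+1}\hookrightarrow(k,+)$, each an elementary abelian $p$-group; the special form of the defining polynomial arising from a projection cover (a polynomial of degree $N$ in a single variable over $k(\mb P^n)$) is used to show that the wild jump occurs only at the first level, giving $W\cong(\mb Z/p\mb Z)^{\oplus e}$.

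Finally, the faithful conjugation action of $T\cong\mb Z/l\mb Z$ on $W\cong\mb F_p^{\oplus e}$ inherits a natural $\mb F_{p^e}$-module structure from the local theory, so that the image of $T$ in $\mathrm{Aut}(W)$ lies in $\mb F_{p^e}^\times$ acting by scalar multiplication, whence $l\mid p^e-1$. The main obstacle I anticipate is the first step: proving the existence of a totally ramified prime of $k(\mac X)/k(\mb P^n)$ without resorting to an explicit normal form for the defining equation of $\mac X$, which the paper itself notes is unknown when $p\mid N$. The argument will combine the transitivity of $G_{\pi_P}$ on generic fibers with intersection-theoretic information along $T_P\mac X\cap\mac X$ in the inner case (or along the polar hypersurface of $P$ in the outer case) to force the desired concentration of ramification at a single place.
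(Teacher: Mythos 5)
First, note that the paper does not prove Theorem \ref{2.5}: it is quoted from Fukasawa--Takahashi [\ref{bio:ft14}], so your proposal has to be measured against the argument there rather than against anything in this text. That argument is essentially group-theoretic rather than ramification-theoretic. One first shows that every $\sigma\in G_{\pi_P}$ acts on the affine generator $x_0$ of $k(\mac X)$ over $K=k(\mb P^n_k)$ by an affine substitution $\sigma(x_0)=\alpha_\sigma x_0+\beta_\sigma$ with $\alpha_\sigma$ a root of unity and $\beta_\sigma\in K$; this is the step that uses normality of $\mac X$ (and smoothness at $P$ in the inner case), and it is the same representation the present paper exploits in Proposition \ref{2.6} and Lemmas \ref{3.2} and \ref{4.2}. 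Once $G_{\pi_P}$ is embedded in the one-variable affine group, everything follows formally: the translations $\{\sigma\mid\alpha_\sigma=1\}$ form a normal subgroup isomorphic to a finite $\mb F_p$-subspace of $(K,+)$, hence elementary abelian of order $p^e$; the quotient injects into $k^{\times}$ via $\sigma\mapsto\alpha_\sigma$, hence is cyclic of order $l$ prime to $p$, and the extension splits; and since $\sigma\tau\sigma^{-1}$ is translation by $\alpha_\sigma\beta_\tau$ when $\tau$ is translation by $\beta_\tau$, a nontrivial $\alpha_\sigma$ fixes no nonzero translation, so the cyclic part acts freely on the $p^e-1$ nonidentity translations and $l\mid p^e-1$.

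Your route through inertia groups has genuine gaps at both of its load-bearing steps. The existence of a prime of $k(\mac X)$ whose inertia group is all of $G_{\pi_P}$ is only asserted (``a Riemann--Hurwitz-type count \ldots forces the existence''); nothing in the sketch rules out the ramification being distributed over several divisors with proper inertia subgroups, and establishing this without a normal form for the defining equation is exactly the difficulty the paper points out when $p$ divides $d-1$ or $d$. More seriously, even granting a totally ramified prime, the local structure theorem only yields $G\cong W\rtimes T$ with $W$ a $p$-group and $T$ cyclic of prime-to-$p$ order; it does \emph{not} yield $W\cong(\mb Z/p\mb Z)^{\oplus e}$, since totally ramified $\mb Z/p^2\mb Z$-extensions of $k((t))$ exist in equal characteristic. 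The elementary abelianness of the wild part is the substantive content of the theorem, and deferring it to ``the special form of the defining polynomial'' forcing a single ramification jump is precisely the point that needs proof. The final deduction $l\mid p^e-1$ from the tame character acting on $G_1/G_2$ would be fine once those two steps are supplied, but as written the proposal does not constitute a proof.
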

Theorem \ref{2.2} can be generalized to higher dimensions. 
The proof follows the same basic ideas as in [\ref{bio:f08},\ {\rm Proposition}\ 1\ {\rm and}\ {\rm Proposition}\ 3].
\begin{pro}\label{2.6}
Let $k$ be a field, 
let $P:=[1:0:\cdots:0]\in\mb P^{n+1}(k)$ be a point, 
let $\mac X\subset \mb P^{n+1}(k)$ be a geometrically irreducible hypersurface of degree $d\geq4$, 
and let $F(X_0,\X)$ be the defining equation of $\mac X$.
	
We assume that $\rmc(k)$ is not a prime factor of $d-1$ $($resp. $d)$, and that $k$ has a primitive $d-1$-th $($resp. $d$-th$)$ root $e_{d-1}$ $($resp. $e_d)$ of unity.
The point $P$ is an extendable inner $($resp. outer$)$ Galois point for $\mac X$ if and only if there exists a matrix $A=(a_{ij})\in {\rm GL}(n+2,k)$ such that 
\[a_{i1}=0\quad {\rm for}\ i=2,\ldots, n+2\]
 and 
\[A^{\ast}F(X_0,\X)=X_{n+1}X_0^{d-1}+H\quad({\rm resp}.\ X_0^d+H)\]
where $H\in k[\X]_d$. 
In addition, the Galois group $G_{\pi_P}$ is generated by an automorphism of $\mac X$ induced by the following matrix:
\[\begin{pmatrix}
	e_{d-1}&0\\
	0&I_{n+1}
\end{pmatrix}\quad (\mathrm{resp}.\ \begin{pmatrix}
e_d&0\\
0&I_{n+1}
\end{pmatrix}).\]
\end{pro}
\begin{proof}
We assume that $\rmc(k)$ is not a prime factor of $d-1$, and that $k$ has a primitive $d-1$-th root $e_{d-1}$ of unity.
First, consider the hypersurface defined by the equation
\[F(X_0,\X)=X_{n+1}X_0^{d-1}+H\] where $H\in k[\X]_d$.
We set $S:=\begin{pmatrix}
	e_{d-1}&0\\
	0&I_{n+1}
\end{pmatrix}$.
Then, we have
\[S^{\ast}F(X_0,\X)=F(X_0,\X).\]
	This implies that
	the matrix $S$ defines an automorphism $g_S$ of $\mac X$ with order $d-1$. 
	Since the matrix \( S \) acts trivially on the components $X_1$ through $X_{n+1}$, 
	it follows that 
\[\pi_P \circ g_S = \pi_P.\]
Since  the degree of $\pi_P$ is $d-1$, $G_{\pi_P}$ is generated by $g_S$. 
Therefore, the point $P$ is an extendable inner Galois point for $\mathcal{X}$.
Next, we assume that $\mac X$ is smooth at $P$, and $P$ is an extendable inner Galois point for $\mac X$. 
Let $S:=(s_{ij})\in{\rm GL}(n+2,k)$ be a matrix such that $\pi_P\circ g_S=\pi_P$ where $g_S$ is the automorphism of $\mac X$ induced by $S$.
To justify the following polynomial identity, we temporarily work over the algebraic closure $\bar{k}$ of $k$.
Let $\ol{\mathcal{X}} := \mathcal{X} \times_{{\rm spec}\,k} {\rm Spec}\,\bar{k}$ be the base change of $\mathcal{X}$ to $\ol{k}$.
Since $\mathcal{X}$ is geometrically irreducible, $\ol{\mathcal{X}}$ is irreducible over $\bar{k}$.
Since
\[{\pi_P}_{\mid \ol{\mac X}\backslash \{P\}}\co \ol{\mac X}\backslash\{P\}\ni[X_0:X_1:\cdots :X_{n+1}]\mapsto[X_1:\cdots:X_{n+1}]\in\mb P^n(k), \]
we have
\[[\sum_{i=1}^{n+2}s_{2i}X_{i-1}:\cdots :\sum_{i=1}^{n+2}s_{n+2\,i-1}X_i]=[X_1:\cdots:X_{n+1}]\]
for a general point $[X_0:X_1:\cdots :X_{n+1}]\in  \ol{\mac X}$.
Since $\ol{\mac X}$ is irreducible, it follows that
\begin{equation*}
	\bigl(\sum_{i=1}^{n+2}s_{u+1\,i}X_{i-1}\bigr)X_v-\bigl(\sum_{i=1}^{n+2}s_{v+1\,i}X_{i-1}\bigr)X_u=0 
\end{equation*}
for any point $[X_0:X_1:\cdots :X_{n+1}]\in \ol{\mac X}$ and $u,v=1,\ldots, n+1$.
Since the degree of $\ol{\mac X}$ is at least $4$, we get that 
$\bigl(\sum_{i=1}^{n+2}s_{u+1\,i}X_{i-1}\bigr)X_v-\bigl(\sum_{i=1}^{n+2}s_{v+1\,i}X_{i-1}\bigr)X_u=0$ as polynomials for  $u,v=1,\ldots, n+1$ over $\bar{k}$.
Since both sides are polynomials with coefficients in $k$, and the identity holds in $\ol{k}[X_0,\ldots,X_{n+1}]$, it must also hold in $k[X_0,\ldots,X_{n+1}]$.
Consequently, we obtain $s_{22}=s_{ii}$ for $i=2,\ldots, n+2$ and $s_{ij}=0$ for $2\leq i,j\leq n+2$ and $i\neq j$.
For simplicity, we may assume that $s_{ii}=1$ for $i=2,\ldots, n+2$.
We set 
\[S=\begin{pmatrix} 
	s_{11} & s_{12} & \dots  & s_{1n+2} \\
	0 & 1 & \dots  & 0\\
	\vdots & \vdots & \ddots & \vdots \\
	0 & 0& \dots  & 1
\end{pmatrix}.\]
If $s_{11}=1$, then 
\begin{equation}\label{eq:1}
S^n=\begin{pmatrix} 
	1 & ns_{12} & \dots  & ns_{1n+2} \\
	0 & 1 & \dots  & 0\\
	\vdots & \vdots & \ddots & \vdots \\
	0 & 0& \dots  & 1
\end{pmatrix}.
\end{equation}
Since $\pi_P\circ g_S=\pi_P$, and the degree of $\pi_P$ is $d-1$, 
there exists $m\in\mb N$ such that $S^m=I_{n+2}$, and $m$ divides $d-1$.
Since $\rmc(k)$ is not a prime factor of $d-1$,
this contradicts the equation $(\ref{eq:1})$.
Thus, 
\[s_{11}\neq1.\]
This implies that the Galois group $G_{\pi_P}$ is a cyclic group of order $d-1$.
For simplicity, we may assume that $g_S$ is a generator of $G_{\pi_P}$.
Then $s_{11}$ is a primitive $d-1$-th root of unity.
Let $A=(a_{ij})\in{\rm GL}(n+2,k)$ be a matrix such that 
\begin{equation*}
	a_{ij}=
	\begin{cases}
		\frac{-s_{1j}}{1-s_{11}}&{\rm if}\ i=1\ {\rm and}\ j\geq 2, \\
		1&{\rm if}\ i=j,\\
		0&{\rm otherwise}.
	\end{cases}
\end{equation*}
Then
\[ASA^{-1}= 
\begin{pmatrix}
	s_{11} & 0 \\
	0 & I_{n+1}
\end{pmatrix}.\]
Let $\mca Y\subset\mathbb P^{n+1}(k)$ be the hypersurface of degree $d$ defined by 
\[A^{\ast}F(X_0,\X)=0.\]
Since the point $P$ is fixed under the action of the matrix $A$,
 $P$ is an extendable inner Galois point for $\mac Y$, and $\mac Y$ is smooth at $P$.
We set 
\[\begin{aligned}
A^{\ast}F(X_0,\ldots, X_{n+1})=\sum_{i=0}^dG_iX_0^{d-i}
\end{aligned}\]
where $G_i\in k[X_1,\ldots, X_{n+1}]_i$ for $i=0,\ldots,d$.
Since $P\in\mac Y$, we have $G_0=0$.
Since $\mac Y$ is smooth at $P$, we get $G_1\neq 0$.
Since $G_1$ is a linear form, there exists a matrix $B\in{\rm GL}(n+1,k)$ such that 
\[B^{\ast}G_1=X_{n+1}.\]
We set $C:=\begin{pmatrix}
	1& 0 \\
	0 & B
\end{pmatrix}\in{\rm GL}(n+2,k)$.
Then
\[ C^{\ast}\left(A^*\F\right)=X_{n+1}X_0^{d-1}+\sum_{i=2}^d\left(B^{\ast}G_i\right)X_0^{d-i}.\]
The matrix $CASA^{-1}C^{-1}=\begin{pmatrix}
s_{11} & 0 \\
0 & I_{n+1}
\end{pmatrix}=ASA^{-1}$ induces an automorphism $g_{ASA^{-1}}$ of $\mac Y$.
Since $s_{11}^{d-1}=1$, $s_{11}^i\neq1$ for $i=1,\ldots, d-2$, and there exists $t\in k^{\ast}$ such that 
\[(ASA^{-1})^{\ast}\left(\sum_{i=1}^{d}\left(B^{\ast}G_i\right)X_0^{d-i}\right)=t\sum_{i=1}^{d}\left(B^{\ast}G_i\right)X_0^{d-i},\]
we obtain $B^{\ast}G_i=0$ for $i=2,\ldots,d-1$.
Since $B\in{\rm GL}(n+1,k)$, we have 
\[G_i=0\quad {\rm for}\ i=2,\ldots,d-1.\]

For the case where $\mathrm{char}(k)$ is not a prime factor of $d$ and that $k$ has a primitive $d$-th root $e_d$ of unity,
the sufficient and necessary conditions for $P$ to be an extendable outer Galois point can be established in a similar manner as above. 
An important observation is that when $P$ is an extendable outer Galois point for $\mac X$, its Galois group $G_{\pi_P}$ is generated by an automorphism represented by a matrix of the form 
\[
S = \begin{pmatrix} 
	e_d & s_{12} & \dots  & s_{1\,n+2} \\ 
	0 & 1 & \dots  & 0 \\ 
	\vdots & \vdots & \ddots & \vdots \\ 
	0 & 0 & \dots  & 1
\end{pmatrix}
\]
where $e_d$ is a primitive $d$-th root of unity.
Let $A=(a_{ij})\in{\rm GL}(n+2,k)$ be a matrix such that 
\begin{equation*}
	a_{ij}=
	\begin{cases}
		\frac{-s_{1j}}{1-e_d}&{\rm if}\ i=1\ {\rm and}\ j\geq 2, \\
		1&{\rm if}\ i=j,\\
		0&{\rm otherwise}.
	\end{cases}
\end{equation*}
Then
 \[ASA^{-1} = \begin{pmatrix}
	e_d & 0 \\ 
	0 & I_{n+1}
\end{pmatrix},\]
and $ASA^{-1}$ defines an automorphism of the hypersurface $\mathcal{Y}$ defined by the equation 
\[\begin{aligned}
	A^{\ast}F(X_0,\X)&=\sum_{i=0}^dG_iX_0^{d-i}\\
	& = 0
\end{aligned}\]
where $G_i\in k[\X]_i$ for $i=0,\ldots, d$.
Since the point $P$ is fixed under the action of the matrix $A$,
$P \not\in\mac Y$, and hence $G_0\neq 0$. 
Since $e_d$ is a primitive $d$-th root of unity, it follows that $G_i= 0$ for $i = 1, \ldots, d-1$. 
The remaining details are omitted as they follow directly from the arguments presented above.
\end{proof}
The following proposition is used for the proofs of Theorems \ref{1.4} and \ref{1.1}.
\begin{pro}\label{2.7}
	Let $k$ be a field, 
let $P:=[1:0:\cdots:0]\in\mb P_k^{n+1}$ be a point, 
and let $\mac X\subset \mb P_k^{n+1}$ be a geometry irreducible hypersurface of degree $d\geq4$.
	We express the defining equation $F(X_0,\ldots, X_{n+1})$ of $\mac X$ as 
\[F(X_0,\ldots, X_{n+1})=\sum_{i=0}^dF_iX_0^{d-i}\]
where $F_i\in k[X_1,\ldots, X_{n+1}]_i$ for $i=0,\ldots,d$.
Then we have the following:
\begin{enumerate}
\item[$(1)$]We assume that $\rmc(k)$ is not a prime factor of $d-1$, 
$k$ has a primitive $d-1$-th root $e_{d-1}$ of unity,
$\mac X$ is smooth at $P$,
and $P$ is an extendable inner Galois point for $\mac X$.
Then
	\[F_i=G_{i-1}F_1\]
where $G_{i-1}\in k[\X]_{i-1}$ for $i=2,\ldots, d-1$.
Moreover, if $F_2=0$, then 
\[F_i=0\quad {\rm for}\ i=3,\ldots, d-1.\]
\item[$(2)$]We assume that $\rmc(k)$ is not a prime factor of $d$,
$k$ has a primitive $d$-th root $e_{d}$ of unity,
$P$ is an extendable outer Galois point for $X$.
If $F_1=0$, then 
\[F_i=0\quad {\rm for}\ i=2,\ldots, d-1.\]
\end{enumerate}
\end{pro}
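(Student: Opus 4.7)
The plan is to apply Proposition \ref{2.6} to put the defining equation into a normal form under a linear change of variables, then invert the substitution and expand in powers of $X_0$ to read off the claimed divisibility relations. The only real computation is a binomial-theorem expansion; the one subtlety is checking that the coordinate change preserves the block structure needed to keep the forms of degree $\geq 2$ independent of $X_0$.

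For part $(1)$, Proposition \ref{2.6} furnishes a matrix $A=(a_{ij})\in{\rm GL}(n+2,k)$ with $a_{i1}=0$ for $i=2,\ldots,n+2$ such that
\[A^{\ast}F(X_0,\X)=X_{n+1}X_0^{d-1}+H_d(\X)\]
for some $H_d(\X)\in k[\X]_d$. Setting $B:=A^{-1}$, I would observe that $A$ is block upper-triangular with respect to the splitting $X_0 \mid (X_1,\ldots,X_{n+1})$, and that $B$ inherits the same shape by the inversion formula for block-triangular matrices. Hence the first row of $B$ applied to $(X_0,\X)$ yields $\alpha X_0+M(\X)$ with $\alpha:=b_{11}\neq 0$ and $M\in k[\X]_1$, while the $j$-th row for $j=2,\ldots,n+2$ yields a linear form $L_j(\X)$ free of $X_0$. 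Substituting $B\mathbf{X}$ into the normal form gives
\[F=L_{n+2}(\X)(\alpha X_0+M)^{d-1}+H_d(L_2,\ldots,L_{n+2}).\]
Comparing the coefficient of $X_0^{d-i}$ on each side yields $F_1(\X)=\alpha^{d-1}L_{n+2}(\X)$, and for $i=2,\ldots,d-1$,
\[F_i(\X)=\binom{d-1}{i-1}\alpha^{d-i}L_{n+2}(\X)M^{i-1}=\binom{d-1}{i-1}\alpha^{-(i-1)}F_1(\X)M^{i-1}.\]
This proves the first statement with $G_i:=\binom{d-1}{i-1}\alpha^{-(i-1)}M^{i-1}\in k[\X]_{i-1}$. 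For the second statement, the case $i=2$ reads $F_2=(d-1)\alpha^{-1}F_1M$; since $F_1\neq 0$ (smoothness of $\mac X$ at $P$) and $\rmc(k)\nmid d-1$, the hypothesis $F_2=0$ forces $M=0$, and the displayed formula immediately gives $F_i=0$ for $i=3,\ldots,d-1$.

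Part $(2)$ proceeds in exactly the same way, now starting from the outer-Galois normal form $A^{\ast}F=X_0^d+H_d(\X)$ in Proposition \ref{2.6}. The same substitution produces $F=(\alpha X_0+M)^d+H_d(L_2,\ldots,L_{n+2})$, whose expansion gives $F_i(\X)=\binom{d}{i}\alpha^{d-i}M^i$ for $i=1,\ldots,d-1$. In particular $F_1=d\alpha^{d-1}M$, and under $\rmc(k)\nmid d$ the hypothesis $F_1=0$ forces $M=0$, whence $F_i=0$ for $i=2,\ldots,d-1$. I do not anticipate any substantive obstacle beyond the bookkeeping above, since Proposition \ref{2.6} already supplies the key normal form and the rest is a direct coefficient comparison.
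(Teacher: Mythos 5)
Your proof is correct and takes essentially the same route as the paper: both invoke Proposition \ref{2.6} for the normal form $A^{\ast}F=X_{n+1}X_0^{d-1}+H_d$ (resp. $X_0^d+H_d$), use that the block-triangular shape of $A$ is preserved under inversion, and then compare coefficients of powers of $X_0$ via the binomial expansion. The only difference is one of direction — you substitute $A^{-1}$ into the normal form and obtain the explicit closed forms $F_i=\binom{d-1}{i-1}\alpha^{-(i-1)}M^{i-1}F_1$, whereas the paper expands $A^{\ast}F$, sets the middle coefficients $H_i$ to zero, and argues by induction on $i$; your variant is marginally more economical but not a genuinely different method.
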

\begin{proof}
We show part $(1)$ of Proposition \ref{2.7}.
Since $\mac X$ is smooth at $P$, we get $F_0=0$ and $F_1\neq 0$.
	By Proposition \ref{2.6}, there exists a matrix $A=(a_{ij})\in{\rm GL}(n+2,k)$ such that $a_{i1}=0$ for $i=2,\ldots, n+2$ and 
	\begin{equation}\label{eq01}
		A^{\ast}F(X_0,\X)=X_{n+1}X_0^{d-1}+G
	\end{equation} where $G\in k[\X]_d$.
	Let $B=(b_{ij})\in{\rm GL}(n+1,k)$ be the matrix defined by $b_{ij}=a_{i-1\,j-1}$ for $2\leq i,j\leq n+2$.
Since $a_{i1}=0$ for $i=2,\ldots, n+2$,
we have
	\begin{dmath*}
		A^{\ast}F(X_0,\X)=\sum_{i=1}^dB^{\ast}F_i\left(\sum_{i=1}^{n+2}a_{1i}X_{i-1}\right)^{d-i}\\
		=\sum_{i=1}^dH_iX_0^{d-i}.
	\end{dmath*}
	where 
\begin{equation}\label{eq011}
H_i=\sum_{j=1}^iB^{\ast}F_j(a_{11})^{d-i}\binom{d-j}{d-i}\left(\sum_{i=2}^{n+2}a_{1i}X_{i-1}\right)^{i-j}
\end{equation}
	for $i=1,\ldots, d$.
By the equation $(\ref{eq01})$, we get 
\[H_i=0\quad {\rm for}\ i=2,\ldots, d-1.\]
Since $A\in {\rm GL}(n+2,k)$, we have $a_{11}\neq0$. 
By the case $i = 2$, we get
\[B^{\ast}F_2=(d-1)B^{\ast}F_1\left(\sum_{i=2}^{n+2}a_{11}X_{i-1}\right).\]
Define
\[G_1:={B^{-1}}^{\ast}\left((d-1)\sum_{i=2}^{n+2}a_{11}X_{i-1}\right)\in k[\X]_1.\]
Then
\[F_2=G_1F_1.\]
	By the equations $(\ref{eq01})$ and using induction, we deduce that 
	\[F_i=G_{i-1}F_1\]
	where $G_{i-1}\in k[\X]_{i-1}$
	for $i=3,\ldots, d-1$.
In particular, if $F_2=0$, then 
$B^{\ast}F_1(d-1)\left(\sum_{i=2}^{n+2}a_{1i}X_{i-1}\right)=0$.
Since $F_1\neq0$, $B\in{\rm GL}(n+1,k)$, and $p$ is coprime to $d-1$, 
	we have 
	\[a_{1i}=0\quad {\rm for}\ i=2,\ldots, n+2.\]
	By the equations $(\ref{eq011})$ and $H_i=0$, we get 
	\[B^{\ast}F_i=0\]
	for $i=3,\ldots, d-1$.
	Since $B\in{\rm GL}(n+1,k)$, it follows that $F_i=0$ for $i=3,\ldots, d-1$.
	By Proposition \ref{2.6}, it follows that $P$ is an extendable inner Galois point for $\mac X$.
	
Next, we show part $(2)$ of Proposition \ref{2.7}.
 This case is proven in the same way as above.
Since $P\not\in \mac X$, we get $F_0\neq 0$.
For simplicity, we may assume that $F_0=1$.
	By Proposition \ref{2.6}, there exists a matrix $A=(a_{ij})\in{\rm GL}(n+2,k)$ such that $a_{i1}=0$ for $i=2,\ldots, n+2$ and 
	\begin{equation}\label{eq02}
		A^{\ast}F(X_0,\X)=X_0^d+G
	\end{equation} where $G\in k[\X]_d$.
	Let $B=(b_{ij})\in{\rm GL}(n+1,k)$ be the matrix defined by $b_{ij}=a_{i+1\,j+1}$ for $1\leq i,j\leq n+1$.
	Then 
	\begin{dmath*}
		A^{\ast}F(X_0,\X)=\sum_{i=0}^dB^{\ast}F_i\left(\sum_{i=0}^{n+2}a_{1i}X_{i-1}\right)^{d-i}\\
		=\sum_{i=0}^dH_iX_0^{d-i}.
	\end{dmath*}
where 
\begin{equation}\label{eq021}
H_i=\sum_{j=0}^iB^{\ast}F_j(a_{11})^{d-i}\binom{d-j}{d-i}\left(\sum_{i=2}^{n+2}a_{1i}X_{i-1}\right)^{i-j}
\end{equation}
	for $i=0,\ldots, d$.
	By the equation $(\ref{eq02})$, we get $H_i=0$ for $i=1,\ldots, d-1$. 
Since $a_{11}\neq0$,
	for the case $i=1$, we have
	\[B^{\ast}F_0d\left(\sum_{i=2}^{n+2}a_{11}X_{i-1}\right)+B^{\ast}F_1=0.\]
If $F_1=0$, then it follows that
	\[a_{1i}=0\quad {\rm for}\ i=2,\ldots, n+2.\]
	By the equations $(\ref{eq021})$ and $H_i=0$, we get 
	\[F_i=0\]
	for $i=2,\ldots, d-1$.
	By Proposition \ref{2.6}, it follows that $P$ is an extendable outer Galois point for $\mac X$.
\end{proof}

\section{Proof of Theorem \ref{1.4}}
Here, we recall the definitions of several symbols and notions introduced earlier.
Let 
\( K \) be a number field, and let \( O_{K} \) be the ring of integers of \( K \).
Let \( Q \) be a nonzero prime ideal of \( K \), and set 
\[\kappa(Q) := O_{K}/Q \] to be the residue field which is a finite extension of the finite field \( \mathbb{F}_p \) where
$p\in\mathbb Z$ is a prime umber such that \( Q \cap \mathbb{Z} = (p) \).
Let $\F \in O_K[X_0,\ldots, X_{n+1}]_d$ be a form of degree $d$.
Then $\F$ naturally becomes a form of degree $d$ with coefficients in $\kappa(Q)$, via reduction modulo the prime ideal \( Q \subset O_K \).
Let \[\mac X_0\subset \mb P^{n+1}(K)\] be the hypersurface defined by $\F=0$, and \[\mac X_Q\subset \mb P^{n+1}(\kappa(Q))\] be the hypersurface defined by $\F=0$, now regarded as a polynomial over $\kappa(Q)$.

For \( F(X_0,\ldots, X_{n+1}) \in O_K[X_0,\ldots, X_{n+1}]_d \), 
let \(\mathcal{MC}(F(X_0,\ldots, X_{n+1}))\) be the product of all coefficients of \(F(X_0,\ldots, X_{n+1})\), defined to be \(1\) if \(F(X_0,\ldots, X_{n+1}) = 0\).

For forms $G(X_0,\ldots, X_{n+1}),H(X_0,\ldots, X_{n+1})\in O_K[X_0,\ldots, X_{n+1}]_d$,
\[
G(X_0,\ldots, X_{n+1}) \equiv_Q H(X_0,\ldots, X_{n+1})
\]
if their images in \( \kappa(Q)[X_0,\ldots, X_{n+1}]_d \) coincide as polynomials over the residue field \( \kappa(Q) \) where $Q$ is a nonzero prime ideal of $O_K$.
\begin{thm}\label{5.1}
Let 
\( K_l \) be a number field containing a primitive \( l \)-th root \( e_l \) of unity where $l\geq2$,
let $P:=[1:0:\cdots:0]\in\mb P^{n+1}(K_l)$ be a point, and
let
\[F(X_0,\ldots, X_{n+1})=\sum_{i=0}^dF_iX_0^{d-i}\in \OK[X_0,\ldots, X_{n+1}]_d\]
be a primitive irreducible form of degree $d\geq4$ where $F_i\in \OK[X_1,\ldots, X_{n+1}]_i$ for $i=0,\ldots,d$.
We assume that the hypersurface \( \mathcal{X}_0 \subset \mathbb{P}^{n+1}(\K) \) defined by \( \F =0\) is geometrically irreducible.
Let \( W \subset \mathrm{Spec}\, \OK \) be a nonempty open subset such that the reduction \( \mathcal{X}_Q \) is irreducible for every point \( Q \in W \).
Then we have the following:
\begin{enumerate}
\item[$(1)$]We assume that $l=d-1$. For $Q\in W$ such that $\mac{MC}(F_1)\not\in Q$ and $\mac{MC}(F_i)\in Q$ for $i=0,2,3,\ldots, d-1$,
 the point $P$ is an extendable inner Galois point for $\mac{X}_Q$ whose Galois group is a cyclic group of order $d-1$. 
 \item[$(2)$]We assume that $l=d$. For $Q\in W$ such that $\mac{MC}(F_0)\not\in Q$ and $\mac{MC}(F_i)\in Q$ for $i=1,\ldots, d-1$,
 the point $P$ is an extendable outer Galois point for $\mac{X}_Q$ whose Galois group is a cyclic group of order $d$. 
\end{enumerate}
\end{thm}
\begin{proof}
	We show part $(1)$ of Theorem \ref{5.1}.
	By the assumption, 
	\[F(X_0,\ldots,X_{n+1})\equiv_Q F_1X_0^{d-1}+F_d.\]
Since $F_1\neq0$ as a polynomial over $\kQ$, $\mac X_Q\subset \mathbb P^{n+1}(\kQ)$ is smooth at $P$.
	By Proposition \ref{2.6}, it follows that $P$ is an inner Galois point for $\mac{X}_Q$ whose Galois group is a cyclic group of order $d-1$. 
	In the same way, we can show part $(2)$ of Theorem \ref{5.1}
\end{proof}
For \( F(X_0,\ldots, X_{n+1}) \in O_K[X_0,\ldots, X_{n+1}]_d \), we write
\[
F(X_0,\ldots, X_{n+1}) = \sum_{i=0}^{d} F_i\, X_0^{d-i},
\]
where \( F_i \in O_{K}[X_1, \ldots, X_{n+1}]_i \) for each \( i \).
 For each \( i = 2, \ldots, d-1 \), we define the form \( H_{\mathrm{inner};\, i}(F(X_0,\ldots, X_{n+1})) \in O_{K}[X_1,\ldots, X_{n+1}]_{2i - 1} \) of degree \( 2i - 1 \) by
	\[
	H_{\mathrm{inner};\, i}(F(X_0,\ldots, X_{n+1})) := \sum_{i \geq j \geq 1}
	\binom{d - j}{i - j} F_j\, (-F_2)^{i - j} \cdot \left((d - 1)F_1\right)^{j - 1}.
	\]
Let
	\[h_{\mathrm{inner}}(\F):=\prod_{i=2}^{d-2}\mac{MC}(H_{\mathrm{inner};\, i}(F(X_0,\ldots, X_{n+1})) )\in O_K,\]
and let
	\[
	Z_{\mathrm{inner}}(\F) := V\left((d - 1)\cdot\mathcal{MC}(F_1)\cdot h_{\mathrm{inner}}(\F)\right)\]
	be a closed subset of \( \mathrm{Spec}\, O_{K} \).
\begin{thm}\label{5.2}
Let \( K \) be a number field containing a primitive \( d-1 \)-th root \( e_{d-1} \) of unity,	
	let $P:=[1:0:\cdots:0]\in\mb P^{n+1}(K)$ be a point, and let \( F(X_0,\ldots, X_{n+1}) \in O_{K}[X_0, \ldots, X_{n+1}]_d \) be a primitive form of degree \( d \).
We assume that the hypersurface \( \mathcal{X}_0 \subset \mathbb{P}^{n+1}(K) \) defined by \( \F =0\) is geometrically irreducible.
Let \( W \subset \mathrm{Spec}\, O_{K} \) be a nonempty open subset such that the reduction \( \mathcal{X}_Q \) is irreducible for every point \( Q \in W \), and let 
\[
U:= W \setminus Z_{\mathrm{inner}}(\F)
\]
be an open subset
where $Z_{\mathrm{inner}}(\F)$ is the closed subset defined above.

If there exists $Q\in U$ such that $\mac X_Q$ is smooth at $P$, and $P$ is an extendable inner Galois point for $\mac X_Q$,
	then for any $Q'\in U$,
	$\mac X_{Q'}$ is smooth at $P$, and $P$ is an extendable inner Galois point for $\mac{X}_{Q'}$.
	In addition, the Galois group of $k(\mathcal{X}_{Q'})/k(\mathbb{P}^n(\kappa(Q')))$ is a cyclic groups of order $d-1$ for any $Q'\in U$.
\end{thm}
\begin{proof}
We write
\[
F(X_0, \ldots, X_{n+1}) = \sum_{i = 0}^d F_i X_0^{d - i},
\]
where \( F_i \in O_{K}[X_1, \ldots, X_{n+1}]_i \) for \( i = 0, \ldots, d \).

First, we assume that $\mac X_0$ is smooth at $P$, and $P$ is an extendable inner Galois point for $\mac X_0$.
Since $\mac X_0$ is smooth at $P$, 
we obtain $F_0=0$ and $F_1\not=0$.
If $F_2=0$, then by Proposition \ref{2.7}, it follows that $F_i=0$ for $i=2,\ldots, d-1$.
By Proposition \ref{2.6}, $P$ is an extendable inner Galois point for $\mac X_Q$ where $Q\in U$. 
We assume that $F_2\neq 0$.
By Proposition \ref{2.7},
	there exist forms $G_{i-1}\in K[\X]_{i-1}$ such that 
	\[F_i=F_1G_{i-1}\]
	for $i=2,\ldots,d-1$.
We set $f_1:=\mac{MC}(F_1)$.
Since $F_2=F_1G_1$, the degree of $F_2$ is $2$, and the degrees of $F_1$ and $G_1$ are $1$, we get
there are $a_1,\ldots,a_{n+1}\in O_K$ such that 
\[G_1=\sum_{i=1}^{n+1}\frac{a_i}{f_1}X_i.\]
Let $A:=(a_{ij})\in {\rm GL}(n+2,K)$ be the matrix defined by
\begin{equation*}
		a_{ij}=
		\begin{cases}
			-\frac{a_{j-1}}{(d-1)f_1}&{\rm if}\ i=1\ {\rm and}\ j\geq 2, \\
			1&{\rm if}\ i=j,\\
			0&{\rm otherwise}.
		\end{cases}
	\end{equation*}
Then, we have 
	\[A^{\ast}F(X_0,\ldots, X_{n+1})=F_1X_0^{d-1}+\sum_{i=3}^{d}H_iX_0^{d-i}\]
	where 
\[H_i=\sum_{i\geq j\geq 1}\binom{d-j}{i-j}F_j\left(-\frac{G_1}{d-1}\right)^{i-j}\in K[\X]_i\]
	for $i=2,\ldots, d$.
	By Proposition \ref{2.7},
\begin{dmath}\label{t1}
H_i=0
\end{dmath}
for $i=2,\ldots, d-1$.

Let $Q\in U$ be a nonzero primes ideal.
Since $f_1\not\in Q$, $F_1\not\equiv_Q 0$.
It follows that $\mac X_Q$ is smooth at $P$.
Since $d-1\not\in Q$, $\left((d-1)f_1\right)^{-1}\in\kappa(Q)$.
Thus, $A\in {\rm GL}(n+2,\kappa(Q))$. 
Since 
\[A^{\ast}F(X_0,\ldots, X_{n+1})\equiv_QF_1X_0^{d-1}-H_d\]
where $F_1\in \kappa(Q)[\X]$ and $H_d\in \kappa(Q)[\X]$,
by Proposition \ref{2.6}, we get that $P$ is an extendable inner Galois point for $\mac{X}_Q$ and its Galois group is a cyclic group of order $d-1$
	
Next, we assume that $\mac X_Q$ is smooth at $P$, and $P$ is an extendable inner Galois point for $\mac X_Q$ where $Q\in U$ is a nonzero prime ideal.
It can be proved using the same approach as the case where $\mac X_0$.
Since $\mac X_Q$ is smooth at $P$, we get that $F_0\equiv_Q0$ and $F_1\not\equiv_Q0$.
Then $F_1\neq0$ as a polynomial with $O_K$ coefficients.
Since $Q\in U$, $\mac{MC}(F_0)\not\in Q$.
Thus, $F_0=0$ as a polynomial with $O_K$ coefficients.
If $F_2\equiv_Q0$,
then by Proposition \ref{2.7}, it follows that 
\[F_i\equiv_Q0\]
for $i=2,\ldots, d-1$.
Since $\mac{MC}(F_i)\not\in Q$, we have 
\[F_i=0\] as a polynomial with $O_K$ coefficients
for $i=2,\ldots, d-1$.
By Proposition \ref{2.6}, $P$ is an extendable inner Galois point for $\mac X_{Q'}$ and its Galois group is a cyclic group of order $d-1$ where $Q'\in U$. 
We assume that $F_2\not\equiv_Q 0$.
	By Proposition \ref{2.6},
	there exists form $G'_1\in \kappa(Q)[\X]_{1}$ such that 
\begin{equation}\label{t2}
F_2\equiv_Q F_1G'_1.
\end{equation}
	We set 
\[G'_1\equiv _Q\sum_{i=1}^{n+1}a_iX_i\]
 where $a_1,\ldots,a_{n+1}\in \kappa(Q)$.
	Let $A:=(a_{ij})\in {\rm GL}(n+2,\kappa(Q))$ be the matrix defined by
	\begin{equation*}
		a_{ij}=
		\begin{cases}
			-\frac{a_{j-1}}{d-1}&{\rm if}\ i=1\ {\rm and}\ j\geq 2, \\
			1&{\rm if}\ i=j,\\
			0&{\rm otherwise}.
		\end{cases}
\end{equation*}
Then, we have 
\[A^{\ast}F(X_0,\ldots, X_{n+1})\equiv_Q F_1X_0^{d-1}+H_d\]
where 
\begin{dmath*}
H_d\equiv_Q\sum_{d\geq j\geq1}\binom{d-j}{d-j}F_j\left(-\frac{G'_1}{d-1}\right)^{d-j}
\end{dmath*}
and 
\begin{dmath}\label{eq6}
0\equiv_Q\sum_{i\geq j\geq1}\binom{d-j}{i-j}F_j\left(-\frac{G'_1}{d-1}\right)^{i-j}
\end{dmath}
for $i=2,\ldots, d-1$.
By multiplying  this equation by $\left((d-1)F_1)\right)^{i-1}$
	for each $i=2,\ldots,d-1$, we have
\begin{align*}
\begin{split}
0\equiv_Q\sum_{i\geq j\geq1}\binom{d-j}{i-j}F_j(-F_2)^{i-j}\left((d-1)F_1\right)^{j-1},
\end{split}
\end{align*}
i.e.
\[	H_{\mathrm{inner};\, i}(F(X_0,\ldots, X_{n+1}))\equiv_Q0\]
for each $i=2,\ldots,d-1$.
Since $Q\in U$, $h_{\mathrm{inner};\, i}(F(X_0,\ldots, X_{n+1}))\not\in Q$ for each $i=2,\ldots,d-1$.
Therefore,
\begin{dmath}\label{eq7}
0=\sum_{i\geq j\geq1}\binom{d-j}{i-j}F_j(-F_2)^{i-j}\left((d-1)F_1\right)^{j-1}
\end{dmath}
as a polynomial with $O_K$ coefficients for each $i=2,\ldots,d-1$.
In particular, for the case $i=3$, we have
	\begin{dmath*}
d(d-3)(F_2)^2=2(d-1)F_1F_3.
	\end{dmath*}
Consequently, $F_1$ is a prime factor of $F_2$.
Since the degree of $F_2$ is $2$, and the degree of $F_1$ is $1$, we get
there exists a form $G_1=\sum_{i=1}^{n+1}a_iX_i\in O_K[\X]_1$
such that
\[F_2=F_1\frac{G_1}{f_1}.\]
	By the equation $(\ref{t2})$, we have
	\[G'_1\equiv_Q\frac{G_1}{f_1}.\]
Thus, the equations in $(\ref{eq6})$ hold for $\frac{G_1}{f_1}$.
By the equations in $(\ref{eq7})$ and Proposition \ref{2.6}, 
we get that for $Q'\in U$, $P$ is an extendable inner Galois point for $\mac{X}_{Q'}$ and its Galois group is a cyclic group of order $d-1$.
\end{proof}
For \( F(X_0,\ldots, X_{n+1}) \in O_K[X_0,\ldots, X_{n+1}]_d \), we write
\[
F(X_0,\ldots, X_{n+1}) = \sum_{i=0}^{d} F_i\, X_0^{d-i},
\]
where \( F_i \in O_{K}[X_1, \ldots, X_{n+1}]_i \) for each \( i \).
For each \( i = 1, \ldots, d-1 \), we define \( H_{\mathrm{outer};\, i}(F) \in O_{K}[X_1,\ldots, X_{n+1}]_i \) by
	\[
	H_{\mathrm{outer};\, i}(\F) := \sum_{i \geq j \geq 0}
	\binom{d - j}{i - j} F_j\, (-F_1)^{i - j} \cdot \left(dF_0\right)^j.
	\]
Let
\[
h_{\mathrm{outer}}(\F):=\prod_{i=2}^{d-1}\mac{MC}(H_{\mathrm{outer};\, i}(F(X_0,\ldots, X_{n+1})) )\in O_K,
\]
and
\[
Z_{\mathrm{outer}}(\F) := V\left(d \cdot \mathcal{MC}(F_0) \cdot h_{\mathrm{outer}}(\F)\right)
\]
be a closed subset of \( \mathrm{Spec}\, O_{K} \).
\begin{thm}\label{5.3}
	Let \( K \) be a number field containing a primitive \( d \)-th root \( e_d \) of unity, let
$P:=[1:0:\cdots:0]\in\mb P^{n+1}(K)$ be a point, and let \( F(X_0,\ldots, X_{n+1}) \in O_{K}[X_0, \ldots, X_{n+1}]_d \) be a primitive form of degree \( d \).
We assume that the hypersurface \( \mathcal{X}_0 \subset \mathbb{P}^{n+1}(K) \) defined by \( \F =0\) is geometrically irreducible.
Let \( W \subset \mathrm{Spec}\, O_{K} \) be a nonempty open subset such that the reduction \( \mathcal{X}_Q \) is irreducible for every point \( Q \in W \), and let
\[
U:= W \setminus Z_{\mathrm{outer}}(\F)
\]
be an open subset where $Z_{\mathrm{outer}}(\F)$ is the closed subset defined above.

If there exists $Q\in U$ such that $\mac X_Q$ is smooth at $P$, and $P$ is an extendable outer Galois point for $\mac X_Q$,
	then for any $Q'\in U$, $P$ is an extendable outer Galois point for $\mac{X}_{Q'}$.
	In addition, the Galois group of $k(\mathcal{X}_{Q'})/k(\mathbb{P}^n(\kappa(Q')))$ is a cyclic groups of order $d$ for any $Q'\in U$.
\end{thm}
\begin{proof}
We write
\[
F(X_0, \ldots, X_{n+1}) = \sum_{i = 0}^d F_i X_0^{d - i},
\]
where \( F_i \in O_{K}[X_1, \ldots, X_{n+1}]_i \) for \( i = 0, \ldots, d \).	
	
First, we assume that $P$ is an extendable outer Galois point for $\mac X_0$.
Since $P\not\in \mac X_0$, $F_0\neq0$. 
	We set $f_0:=\mac{MC}(F_0)$ and 
\[F_1=\sum_{i=1}^{n+1}a_iX_i.\]
Let $A:=(a_{ij})\in {\rm GL}(n+2,K)$ be the matrix defined by
	\begin{equation*}
		a_{ij}=
		\begin{cases}
			-\frac{a_{j-1}}{df_0}&{\rm if}\ i=1\ {\rm and}\ j\geq 2, \\
			1&{\rm if}\ i=j,\\
			0&{\rm otherwise}.
		\end{cases}
	\end{equation*}
	Then, we have 
\begin{equation}\label{eqq1}
A^*\F=F_0X_0^d+\sum_{i=2}^dH_iX_0^{d-i}
\end{equation}
	 where
	\begin{dmath*}
		H_i=\sum_{i\geq j\geq0}\binom{d-j}{i-j}F_j\left(-\frac{F_1}{df_0}\right)^{i-j}
	\end{dmath*}
	for $i=2,\ldots, d$.
	By Proposition \ref{2.7},
\begin{dmath}\label{t4}
0=H_i
\end{dmath}
for $i=2,\ldots, d-1$.
Let $Q\in U$.
Since $df_0\not\in Q$,
we get that $F_0\not\equiv_Q0$ and $A\in {\rm GL}(n+2,\kappa(Q))$.
By the equations in $(\ref{eqq1})$ and $(\ref{t4})$,
 \[A^{\ast}F(X_0,\X)\equiv_QF_0X_0^d-H_d.\]
By Proposition \ref{2.6}, $P$ is an extendable outer Galois point for $\mac{X}_Q$ and its Galois group is a cyclic group of order $d$.
	
Next, we assume that $P$ is an extendable outer Galois point for $\mac X_Q$ where $Q\in U$ is a nonzero prime ideal.
	It can be proved using the same approach as the case where $\mac X_0$.
By using $F_1=\sum_{i=1}^{n+1}a_iX_i$,
we define the matrix $A$ as above. 
Then 
\[A^*\F\equiv_QF_0X_0^d+\sum_{i=2}^dH_iX_0^{d-i}\]
where
\[
	H_i:=\sum_{i\geq j\geq0}\binom{d-j}{i-j}F_j\left(-\frac{F_1}{df_0}\right)^{i-j}
\]
for $i=2,\ldots, d$, and
\[H_i\equiv_Q0\] 
for $i=2,\ldots, d-1$.
By multiplying this equation by $\bigr(df_0\bigl)^i$
	for each $i=2,\ldots,d-1$, we have
	\begin{dmath*}
		0\equiv_Q\sum_{i\geq j\geq0}\binom{d-j}{i-j}F_j(-F_1)^{i-j}\bigr(df_0\bigl)^j,
	\end{dmath*}
i.e.
\[H_{\mathrm{outer};\, i}(\F)\equiv_Q0\]
for $i=1,\ldots,d-1$.
Since $h_{\mathrm{outer};\, i}(\F)\not\in Q$ for $i=1,\ldots,d-1$,
\[H_{\mathrm{outer};\, i}(\F)=0\]
as a polynomial with $O_K$ coefficients for $i=1,\ldots,d-1$.
The remaining proof is omitted as it is the same as the case where \(\mac X_0\).
\end{proof}

\section{Proof of parts $(1)$ and $(2)$ of Theorem \ref{1.1}}

Let $k$ be an algebraically closed field, let $P := [1:0:\cdots:0] \in \mathbb{P}^{n+1}(k)$ be a point, and let $\mathcal{X} \subset \mathbb{P}^{n+1}(k)$ be an irreducible hypersurface of degree $d$, and let $F(X_0,\X):=\sum_{i=0}^dF_iX_0^{d-i}$ be the defining equation of $\mac X$ where $F_i\in k[\X]_i$ for $i=0,\ldots, d$.
Let $(x_0, \ldots, x_n) = (X_0 / X_{n+1}, \ldots, X_n / X_{n+1})$ be a system of affine coordinates, and define 
\[
f(x_0, x_1, \ldots, x_n) := F(x_0, \ldots, x_n, 1).
\]
Then 
\[
f(x_0, x_1, \ldots, x_n) =\sum_{i=0}^df_ix_0^{d-i}
\]
where $f_i\in k[x_1,\ldots,x_{n+1}]$ with ${\rm deg}(f_i)\leq i$ for $i=0,\ldots, d$.
We set 
\[K := k(x_1,\ldots,x_n)\quad{\rm and}\quad L := \mathrm{Quot}\left( k[x_0,x_1,\ldots,x_n] / \big(f(x_0, x_1, \ldots, x_n)\big) \right).
\]
Note that $L = k(\mathcal{X})$ and $K = k(\mathbb{P}^n(k))$.
In this setting, the field extension induced by the point projection $\pi_P \colon \mac X \dashrightarrow \mathbb{P}^n(k)$ corresponds to $L / K$. 
Let $\mathrm{Aut}_K(L)$ denote the group of automorphisms of $L$ that act as the identity on $K$. This group is naturally identified with $G_{\pi_P}$.

We assume that $P$ is an extendable Galois point for $\mathcal{X}$.  
As follows from the proof of Proposition \ref{2.6}, the Galois group $G_{\pi_P}$ is generated by an automorphism $g$ induced by a matrix of the form:
\[
\begin{pmatrix} 
	s_{11} & s_{12} & \dots  & s_{1\,n+2} \\ 
	0 & 1 & \dots  & 0 \\ 
	\vdots & \vdots & \ddots & \vdots \\ 
	0 & 0 & \dots  & 1
\end{pmatrix}
\]
where $s_{11}^{d-1} = 1$ if $\mac X$ is smooth at $P$ and $s_{11}^d= 1$ if $P\not\in \mac X$.
Moreover, if $\mac X$ is smooth at $P$ and $\rmc(k)$ is coprime to $d-1$, or $P\not\in \mac X$ and $\rmc(k)$ is coprime to $d$, then $s_{11}\neq1$.
The element $\sigma \in {\rm Aut}_K(L)$ corresponding to $g$ is expressed as:
\[
\sigma(x_0) = s_{11}x_0 + s_{12}x_1 + s_{13}x_2 + \cdots + s_{1\,n+1}x_n + s_{1\,n+2}.
\]
In other words, $\sigma(x_0)$ can be written as:
\[
\sigma(x_0) = ax_0 + b,
\]
where $a \in k^{\ast}$ and $b \in k[x_1,\ldots,x_n]$ is a polynomial of degree at most one.
\begin{lem}\label{3.1}
	Let $k$ be a field of $\rmc(k)\neq3$, and
	\[f(x, \y) = x^3 + ax +b\]
	be an irreducible polynomial with $a,b \in k[\y]$ where $x, y_1, \ldots, y_n$ are indeterminates.
We set 
\[K := k(\y)\quad {\rm and}\quad L := {\rm Quot}(k[x, \y] / (f(x, \y))).\]	
We assume that there is a non-trivial automorphism $g \in {\rm Aut}_K(L)$ such that 
	\[ 
	g(x) = Ax + B, 
	\] 
	where $A \in k$ with $A\not=1$, $A^3 = 1$, and $B \in k[\y]$.
	Then, we have $a=B=0$.
\end{lem}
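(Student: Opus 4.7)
The plan is to exploit the fact that any $K$-automorphism of $L$ must send the root $x$ of the irreducible polynomial $f(T) = T^{3}+aT+b$ to another root of $f$. So the core equation will be
\[
f(g(x)) = (Ax+B)^{3} + a(Ax+B) + b = 0 \quad \text{in } L.
\]
I would expand this, then reduce modulo $f(x) = 0$ (i.e.\ replace $x^{3}$ by $-ax-b$), and use $A^{3}=1$ to collapse the cubic term. The outcome is a polynomial in $x$ of degree at most $2$ with coefficients in $K$. Because $f$ has degree $3$ and is irreducible, $\{1,x,x^{2}\}$ is a $K$-basis of $L$, so each of the three coefficients must vanish individually.

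The three vanishing conditions, after cleaning up, are
\[
3A^{2}B = 0, \qquad 3AB^{2} + a(A-1) = 0, \qquad B^{3}+aB = 0.
\]
From here the argument is essentially mechanical: since $\rmc(k)\neq 3$ and $A\in k^{\ast}$ (as $A^{3}=1$), the first equation forces $B=0$; substituting $B=0$ into the second equation gives $a(A-1)=0$, and since $A\neq 1$ by hypothesis, this forces $a=0$; the third equation is then automatically satisfied.

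There is no real obstacle here — the proof is a direct computation. The only delicate point is justifying the step where the three coefficient equations are separated, which requires the irreducibility of $f$ to guarantee that $\{1,x,x^{2}\}$ is $K$-linearly independent in $L$. I would record this observation explicitly before equating coefficients, so that the subsequent arithmetic (which uses only $A^{3}=1$, $A\neq 1$, and $\rmc(k)\neq 3$) is unambiguous.
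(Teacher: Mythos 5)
Your proposal is correct and follows essentially the same route as the paper's proof: expand $f(g(x))=0$, reduce modulo $f(x)=0$ using $A^{3}=1$, equate the coefficients of the $K$-basis $\{1,x,x^{2}\}$ of $L$ to zero, and deduce $B=0$ from $3A^{2}B=0$ (using $\rmc(k)\neq 3$, $A\neq 0$) and then $a=0$ from $a(A-1)=0$ (using $A\neq 1$). The three coefficient equations you obtain match those in the paper exactly.
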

\begin{proof}
	Since $f(g(x), \y) =0$,
	we have
	\[
	\begin{aligned}
		0 &= \bigl(Ax + B\bigr)^3 + a\bigl(Ax + B\bigr) +b\\
		&=(x^3 + 3A^2Bx^2 +3AB^2x +B^3)+ (aAx +aB) + b\\
	&=x^3 + 3A^2Bx^2 + \bigl(3AB^2 + aA\bigr)x + B^3 + aB + b.
	\end{aligned}
	\]
	Since $x^3+ax+b=0$,
	we get
	\[
	\begin{aligned}
		0&=3A^2Bx^2+\bigl(3AB^2 + aA-a\bigr)x + B^3 + aB.
	\end{aligned}
	\]
	Since $1, x, x^2$ form a basis of $L$ over $K$, 
	\begin{equation}\label{eq7}
		\begin{cases}
			\begin{split}
				0=&3A^2B,\\
				0=&3AB^2 + aA-a,\\
				0=&B^3 + aB.
			\end{split}
		\end{cases}
	\end{equation}
	Since $A\neq0$ and $\rmc(k)\neq 3$,
	from the first equation in $(\ref{eq7})$, 
	we get $B=0$.
	Since $A\neq 1$,
	from the second equation in $(\ref{eq7})$, 
	we get $a=0$.
\end{proof}
If $\rmc(k)=3$, then for $a\in k$ we have $a^3=1$ if and only if $a=1$.
\begin{lem}\label{3.2}
	Let $k$ be a field of $\rmc(k)=3$, and 
	\[ f(x,\y) = x^3 + ax^2 +bx +c, \]
	be an irreducible polynomial with $a,b,c \in k[\y]$ where $x, y_1, \ldots, y_n$ are indeterminates.
We set 
\[K := k(\y)\quad {\rm and}\quad L := {\rm Quot}(k[x, \y] / (f(x, \y))).\]	
We assume that there is a non-trivial automorphism $g \in {\rm Aut}_K(L)$ such that 
	\[ g(x) =x + B, \] 
	where $B\in k[\y]$ is a non-zero polynomial of degree at most one. 
	Then, we have $a=0$, and $b=-B^2$. 
\end{lem}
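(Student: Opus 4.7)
The plan is to imitate the proof of Lemma~\ref{3.1}: substitute $g(x) = x + B$ into the defining relation $f(x,\y) = 0$ in $L$, and compare coefficients against the $K$-basis $\{1, x, x^2\}$ of $L$. The crucial difference from Lemma~\ref{3.1} is that in characteristic $3$ the Frobenius identity $(x+B)^3 = x^3 + B^3$ kills the $x^2$- and $x$-contributions from the cube, so the obstruction that previously forced $B = 0$ disappears and a non-trivial translation automorphism can exist.

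Concretely, I would first observe that since $g \in {\rm Aut}_K(L)$ fixes $K$ and $f(x,\y) = 0$ in $L$, we also have $f(x+B, \y) = 0$. Expanding $(x+B)^3 = x^3 + B^3$ and $(x+B)^2 = x^2 + 2Bx + B^2$, and then using $2 = -1$ in $k$ together with the original relation $x^3 + ax^2 + bx + c = 0$, the identity $f(x+B, \y) = 0$ reduces to
\[
0 = -aBx + \bigl(B^3 + aB^2 + bB\bigr)
\]
in $L$. Since $1$ and $x$ are linearly independent over $K$, comparing coefficients yields the two relations
\[
aB = 0, \qquad B\bigl(B^2 + aB + b\bigr) = 0.
\]

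The conclusion follows immediately from the hypothesis that $B$ is a non-zero polynomial: cancelling $B$ in the first relation forces $a = 0$, and substituting into the second gives $b = -B^2$. There is essentially no serious obstacle here; the calculation is a direct specialisation of the argument of Lemma~\ref{3.1} to characteristic $3$, and the only subtlety worth flagging is the one already mentioned, namely that in this characteristic the cube contributes nothing to the $x$- and $x^2$-coefficients, which weakens the resulting system enough to admit solutions with $B \neq 0$, in contrast to the characteristic $\neq 3$ case treated in Lemma~\ref{3.1}.
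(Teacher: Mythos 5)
Your proposal is correct and follows essentially the same route as the paper's own proof: substitute $g(x)=x+B$ into the relation, use $(x+B)^3=x^3+B^3$ in characteristic $3$, subtract the original relation, and compare coefficients against the basis $\{1,x,x^2\}$ to get $aB=0$ and $B(B^2+aB+b)=0$, whence $a=0$ and $b=-B^2$ since $B\neq 0$ in the integral domain $k[\y]$. No gaps.
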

\begin{proof}
Since $g$ is a non-trivial automorphism, $B\neq0$.
	Since $f(g(x), \y) = 0$ and $x^3+ax^2+bx+c=0$, we have an equation
	\[\begin{aligned}
		0&=(x+B)^3+a(x+B)^2+b(x+B)+c\\
	&=(x^3+B^3)+a(x^2+2Bx+B^2)+b(x+B)+c\\
		&=2aBx + B^3 + aB^2 + bB.
	\end{aligned}\]
	Since $1, x, x^2$ form a basis of $L$ over $K$, 
	we have
	\begin{equation}\label{eq71}
	\begin{cases}
		\begin{split}
0=&2aB,\\
0=&B^3 + aB^2 + bB.
		\end{split}
	\end{cases}
\end{equation}
Since $\rmc(k)\neq 2$ and $B\neq0$, from the first equation in $(\ref{eq71})$, it follows that $a=0$.
From the second equation in $(\ref{eq71})$, we obtain $B^3+bB^2=0$.
Since $B\neq 0$, we have $b=-B^2$.
\end{proof}
\begin{thm}\label{33}
	Let $k$ be an algebraically closed filed,
	let $P:=[1:0:\cdots:0]\in\mb P^{n+1}(k)$ be a point, 
	and let $\mac X\subset \mb P^{n+1}(k)$ be an irreducible hypersurface of degree $4$.
	We express the defining equation $F(X_0,\ldots, X_{n+1})$ of $\mac X$ as 
	\[F(X_0,\ldots, X_{n+1})=\sum_{i=0}^4F_iX_0^{4-i}\]
	where $F_i\in k[X_1,\ldots, X_{n+1}]_i$ for $i=0,\ldots,4$.
	We assume that $\mac X$ is smooth at $P$. 
	The point $P$ being an extendable inner Galois point for $\mac X$ is equivalent to the following condition holding:
\begin{enumerate}
\item[$(1)$]If $\rmc(k)=3$, then 
\[F_2=0\quad\mathrm{and}\quad F_3=-G^2F_1\]
	where $G\in k[\X]$ is a non-zero form $G\in k[\X]_1$.
	\item[$(2)$]If $\rmc(k)\neq3$, then 
	\[3F_1F_3-F_2^2=0.\]
\end{enumerate}
In both cases, $G_{\pi_P}$ is a cyclic group of order $3$.
\end{thm}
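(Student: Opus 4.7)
The plan is to translate the statement to the affine setting of Section 4 and extract the algebraic condition that the generator of $\mathrm{Aut}_K(L) = G_{\pi_P}$ must satisfy, then handle the two characteristic cases using Lemmas \ref{3.1} and \ref{3.2}. After dehomogenizing with respect to $X_{n+1}$, I work with
\[
f(x_0, \y) = f_1(\y) x_0^3 + f_2(\y) x_0^2 + f_3(\y) x_0 + f_4(\y),
\]
where $f_i$ is the dehomogenization of $F_i$ and $f_1 \neq 0$ since $\mac X$ is smooth at $P$. Set $K = k(\y)$ and $L = k(\mac X)$. The extendable inner Galois point hypothesis on $P$ is equivalent to $\mathrm{Aut}_K(L)$ being generated by an automorphism of the form $\sigma(x_0) = a x_0 + b$ with $a \in k^{\ast}$ satisfying $a^3 = 1$ and $b \in k[\y]$ of degree at most $1$.

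For part $(2)$ (the case $\rmc(k) \neq 3$), the forward direction proceeds by dividing $f$ by $f_1$ and performing the Tschirnhaus substitution $x = x_0 + f_2/(3 f_1)$ to depress the cubic to $x^3 + \alpha x + \beta$ with $\alpha = (3 f_1 f_3 - f_2^2)/(3 f_1^2)$. The automorphism becomes $\sigma(x) = a x + b'$ for some $b' \in K$, so Lemma \ref{3.1} (whose argument uses only field axioms and so extends verbatim to coefficients in $K$) forces $\alpha = 0$; rehomogenizing yields $3 F_1 F_3 - F_2^2 = 0$. For the converse, irreducibility of the nonzero linear form $F_1$ together with $F_1 \mid F_2^2$ gives $F_2 = G_2 F_1$ for some linear form $G_2$, and then $F_3 = G_2^2 F_1 / 3$. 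A direct computation shows that the matrix sending $X_0 \mapsto X_0 - G_2/3$ and fixing $X_1, \ldots, X_{n+1}$ transforms $F$ into $F_1 X_0^3 + \tilde H_4(\X)$; a further linear change in $X_1, \ldots, X_{n+1}$ replaces $F_1$ with $X_{n+1}$, and Proposition \ref{2.6} concludes the argument.

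For part $(1)$ (the case $\rmc(k) = 3$), the condition $a^3 = 1$ forces $a = 1$, so $\sigma(x_0) = x_0 + B$ with $B$ nonzero of degree at most one in $\y$. The forward direction applies Lemma \ref{3.2} to the monic cubic $x_0^3 + (f_2/f_1) x_0^2 + (f_3/f_1) x_0 + (f_4/f_1)$ over $K$, yielding $f_2/f_1 = 0$ and $f_3/f_1 = -B^2$; homogenizing $B$ to a nonzero linear form $G \in k[\X]_1$ gives $F_2 = 0$ and $F_3 = -G^2 F_1$. For the converse, I consider the shear $X_0 \mapsto X_0 + G$ (fixing the remaining coordinates) and use the characteristic-$3$ identity $(X_0 + G)^3 = X_0^3 + G^3$ to verify that this shear preserves $F$ whenever $F_2 = 0$ and $F_3 = -G^2 F_1$. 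The induced automorphism lies in $G_{\pi_P}$, has order $3$, and generates $\mathrm{Aut}_K(L)$ since $[L:K] = 3$.

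The main obstacle is the characteristic $3$ case: Proposition \ref{2.6} does not apply because $3 \mid d - 1$, so the Galois automorphism must be produced by hand rather than extracted from a cyclotomic-type diagonalization. The decisive input is the identity $(X_0 + G)^3 = X_0^3 + G^3$ in characteristic $3$, which makes the shear $X_0 \mapsto X_0 + G$ preserve $F$ precisely when $F_2 = 0$ and $F_3 = -G^2 F_1$. A secondary point is that after dividing $f$ by $f_1$ the coefficients live in $K$ rather than $k[\y]$, but the proofs of Lemmas \ref{3.1} and \ref{3.2} use only field axioms and so apply without change. In both cases the exhibited automorphism has order $3$, which together with $[L:K] = 3$ forces the Galois group to be cyclic of order $3$.
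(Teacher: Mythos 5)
Your proposal is correct and follows essentially the same route as the paper: the shear form of the generator extracted from the proof of Proposition \ref{2.6}, Lemma \ref{3.2} for the forward direction in characteristic $3$, the identity $(X_0+G)^3=X_0^3+G^3$ for the converse shear, and a depression of the cubic for characteristic $\neq 3$. The only cosmetic difference is that in part $(2)$ you invoke Lemma \ref{3.1} after dividing by $f_1$ (correctly noting that its proof tolerates coefficients in $K$ rather than $k[\y]$), whereas the paper applies Proposition \ref{2.7} twice after a projective coordinate change; the underlying computation is identical.
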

\begin{proof}
	We assume that $\rmc(k)=3$.
	Since $\mac X$ is smooth at $P$, 
	$F_0=0$ and $F_1\neq 0$.
	Let $A\in{\rm GL}(n+1,k)$ be a matrix such that 
\[A^{\ast}F_1=X_{n+1}.\]
We set $B:=\begin{pmatrix}
		1& 0 \\
		0 & A
	\end{pmatrix}\in{\rm GL}(n+2,k)$.
	Then we have
	\[ B^{\ast}F(X_0,\X)=X_{n+1}X_0^3+\sum_{i=2}^4\left(A^{\ast}F_i\right)X_0^{4-i}.\]
	If $P$ is an extendable inner Galois point for $\mac X$, then
$g\in G_{\pi_P}$ is induced by a matrix of the form:
\[
\begin{pmatrix} 
1 & s_{12} & \dots  & s_{1\,n+2} \\ 
	0 & 1 & \dots  & 0 \\ 
	\vdots & \vdots & \ddots & \vdots \\ 
	0 & 0 & \dots  & 1
\end{pmatrix}.
\]
By Lemma \ref{3.2},
	$A^{\ast}F_2=0$ and 
	there exists a form $G'\in k[\X]$ such that
	\[A^{\ast}F_3=-{G'}^2X_{n+1}.\]
Since $A\in{\rm GL}(n+1,k)$, we have $F_0=0$.
	Let 
	\[G:={(A^{-1})}^{\ast}G'.\]
	Then
\[F_3=-G^2F_1.\]

Conversely, we assume that $F_2=0$ and
$F_3=-G^2F_1$
where $G\in k[\X]$ is a non-zero form.
	We set 
	\[G=\sum_{i=1}^{n+1}c_iX_i\] where $c_i\in k$ for $i=1,\ldots, n+1$.
Let $C:=(c_{ij})\in{\rm GL}(n+2,k)$ be the matrix defined by 
\begin{equation*}
c_{ij}=
	\begin{cases}
	c_{j-1}&{\rm if}\ i=1\ {\rm and}\ j\geq 2, \\
		1&{\rm if}\ i=j,\\
		0&{\rm otherwise}.
	\end{cases}
\end{equation*}
Since $F_3=-G^2F_1$, we have
	\begin{dmath*}\label{eq9}
C^{\ast}\F
=\sum_{i=1,3,4}F_i(X_0+G)^{4-i}
= F_1(X_0^3+G^3)+F_3\bigl(X_0+G\bigr)+F_4
=F_1(X_0^3+G^3)+F_3X_0-G^3F_1+F_4
=\F.
\end{dmath*}
Thus, the matrix $C$ defines an automorphism $g_C$ of $\mathcal X$. 
Since $\rmc(k)=3$, the order of $g_C$ is also $3$. 
Moreover, Since the matrix \( C\) acts trivially on the components \( X_1 \) through \( X_{n+1} \), 
we get
\[\pi_P\circ g_C=\pi_P\]
	where $\pi_P\co \mac X \dashrightarrow\mb P^n(k)$ is the projection with center $P$.
Then $|G_{\pi_P}|=3$, and hence $P$ is an extendable inner Galois point for $\mac X$.

We assume that $\rmc(k)\neq 3$.
If the point $P$ is an extendable inner Galois point for $\mac X$, then
by Proposition \ref{2.7}, we have 
\[F_i=F_1G_{i-1}\]
where $G_{i-1}\in k[\X]_{i-1}$ for $i=2,3$.
We set 
\[G_1=\sum_{i=1}^{n+1}a_iX_i\]
	where $a_i\in k$ for $i=1,\ldots, n+1$.
	Let $A:=(a_{ij})\in {\rm GL}(n+2,k)$ be the matrix defined by
	\begin{equation*}
		a_{ij}=
		\begin{cases}
			-\frac{a_{j-1}}{3}&{\rm if}\ i=1\ {\rm and}\ j\geq 2, \\
			1&{\rm if}\ i=j,\\
			0&{\rm otherwise}.
		\end{cases}
	\end{equation*}
Since $F_2=F_1G_1$,
we have
\begin{equation}\label{u1}
		\begin{split}
A^{\ast}F(X_0,\X)=&\sum_{i=1}^4F_i\left(X_0-\frac{1}{3}G_1\right)^{4-i}\\
=&F_1X_0^3+\left(-\frac{1}{3}F_2G_1+F_3\right)X_0\\
&+\frac{2}{27}F_2G_1^2-\frac{1}{3}G_1F_3+F_4.
\end{split}
\end{equation}
Since the point $P$ is fixed under the action of the matrix $A$, $P$ is an extendable inner Galois point for the quartic hypersurface defined by $A^{\ast}F(X_0,\X)=0$.
By Proposition \ref{2.7}, we have
$F_3-\frac{1}{3}F_2G_1=0$.
By multiplying $F_1$, we get
\begin{equation}\label{s1}
3F_1F_3-F_2^2=0.
\end{equation}
	
	Conversely, we assume that the equation $(\ref{s1})$ holds.
Then there exists a form $G_1\in k[\X]_1$ such that
\[F_2=F_1G_1.\]
By using $G_1=\sum_{i=1}^{n+1}a_iX_i$,
	we define the matrix $A$ as before. 
By the equations $(\ref{u1})$ and $(\ref{s1})$, we have
	\[A^{\ast}F(X_0,\X)=X_{n+1}X_0^3+H_4\]
where $H_4\in k[\X]_4$.
	By Proposition \ref{2.6}, it follows that $P$ is an extendable inner Galois point for $\mathcal{X}$.
\end{proof}
Let \( k \) be an algebraically closed field of $\rmc(k)=3$, and $\mathcal X\subset \mathbb P^{n+1}(k)$ be an irreducible quartic hypersurface defined as the zero locus of the polynomial
\[
F(X_0, \ldots, X_{n+1}) = F_1X_0^3 + \sum_{i=3}^4 F_iX_0^{4-i},
\]
where \( F_i \in k[\X]_i \) for \( i = 1, 3, 4 \).
In what follows, we will show that \( P := [1:0:\cdots:0] \in \mathbb{P}^{n+1}(k) \) is an inner Galois point for \( \mathcal{X} \), then \( P \) is, in fact, an extendable inner Galois point for \( \mathcal{X} \).

The following fact is known for Galois extensions of degree \( 3 \).
This result is standard; see [\ref{bio:ma},\ {\rm Theorem}\ 16.8.5].
\begin{thm}\label{3.4}
	Let $K$ be a field with $\c(K)\neq2$ and $f(x) = x^3 + a_1x^2 + a_2x + a_3 \in K[x]$ be a separable irreducible polynomial, 
	let $L$ be the splitting field of $f(x)$ over $K$, and let $G$ be the Galois group of $L/K$.
	Let $\Delta$ be the discriminant of $f(x)$, i.e., 
\[\Delta := \prod_{i < j} (\alpha_i - \alpha_j)^2\] where $\alpha_1, \alpha_2, \alpha_3$ are the roots of $f(x)$.
Define
	\[
	b_1 := a_1a_2 - 3a_3\quad{\rm and}\quad  b_2 := a_2^3 + 9a_3^2 - 6a_1a_2a_3 + a_1^3a_3.
	\]
	Then, we have the following:
\begin{enumerate}
\item[$(1)$]$\Delta = b_1^2 - 4b_2$,
\item[$(2)$]If $\Delta$ is a square in $K$, then [$L:k]=3$, and $G$ is a cyclic group of order $3$.
\item[$(3)$]If $\Delta$ is not a square in $K$, then [$L:k]=6$, and $G$ is the symmetric group $\mac S_3$.
\end{enumerate}	
\end{thm}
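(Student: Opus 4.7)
The plan is to handle part $(1)$ as a direct symbolic calculation, and parts $(2)$--$(3)$ via the standard sign-character argument relating the square root of the discriminant to the alternating subgroup.

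For part $(1)$, write the roots of $f$ as $\alpha_1,\alpha_2,\alpha_3$, so Vieta's formulas give $a_1=-e_1$, $a_2=e_2$, $a_3=-e_3$ for the elementary symmetric polynomials $e_i$ in the $\alpha_j$. Expanding
\[
(a_1 a_2 - 3a_3)^2 - 4(a_2^3 + 9a_3^2 - 6a_1 a_2 a_3 + a_1^3 a_3)
\]
and collecting yields $a_1^2 a_2^2 - 4a_2^3 - 4a_1^3 a_3 - 27 a_3^2 + 18 a_1 a_2 a_3$, which is the classical discriminant formula for a monic cubic and hence equals $\prod_{i<j}(\alpha_i-\alpha_j)^2$. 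Equivalently, the identity can be read off from $\Delta=(-1)^{d(d-1)/2}\mathrm{Res}(f,f')/\mathrm{lc}(f)$ specialized to $d=3$. This is a routine symbolic verification.

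For parts $(2)$ and $(3)$, set
\[
\delta := (\alpha_1 - \alpha_2)(\alpha_1 - \alpha_3)(\alpha_2 - \alpha_3)\in L,
\]
so $\delta^2 = \Delta$. Because $f$ is separable and irreducible, the $\alpha_i$ are distinct, so $\delta\neq 0$, and $G$ acts faithfully and transitively on $\{\alpha_1,\alpha_2,\alpha_3\}$. Thus $G$ embeds in $\mac S_3$ as a transitive subgroup, so $3\mid |G|$ and $|G|\in\{3,6\}$. For $\sigma\in G$, viewed as an element of $\mac S_3$, one has $\sigma(\delta)=\mathrm{sgn}(\sigma)\,\delta$. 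Since $K=L^G$ by Galois theory, $\delta\in K$ if and only if every $\sigma\in G$ is an even permutation, i.e. $G\subseteq \mac A_3$.

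If $\Delta$ is a square in $K$, then some $\delta\in K$ satisfies $\delta^2=\Delta$; hence $G\subseteq\mac A_3$, and $3\mid|G|$ forces $G=\mac A_3\cong\mb Z/3\mb Z$ and $[L:K]=3$, giving $(2)$. If $\Delta$ is not a square in $K$, then $\delta\notin K$, so $G$ contains an odd permutation and must be all of $\mac S_3$, giving $[L:K]=6$ and $(3)$. The main obstacle is purely the bookkeeping in part $(1)$; parts $(2)$ and $(3)$ are conceptually immediate once the sign-character observation is in place and one recalls that the Galois group of the splitting field of a separable irreducible cubic is a transitive subgroup of $\mac S_3$ whose order is divisible by $3$.
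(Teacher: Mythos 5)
The paper offers no proof of Theorem \ref{3.4}; it is cited as a standard fact, and your argument is exactly the standard one: verify $b_1^2-4b_2$ equals the classical cubic discriminant, then use that $\delta=\prod_{i<j}(\alpha_i-\alpha_j)$ satisfies $\sigma(\delta)=\mathrm{sgn}(\sigma)\delta$ and that $G$ is a transitive subgroup of $\mathcal{S}_3$. Part $(1)$ checks out ($b_1^2-4b_2=a_1^2a_2^2-4a_2^3-4a_1^3a_3-27a_3^2+18a_1a_2a_3$), and parts $(2)$--$(3)$ are correct as you argue them whenever $\mathrm{char}(K)\neq 2$.

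There is, however, a genuine gap in characteristic $2$, and it is worth naming because the theorem is stated for an arbitrary field $K$. Your key equivalence ``$\delta\in K$ if and only if $G\subseteq\mathcal{A}_3$'' uses that $\mathrm{sgn}(\sigma)\delta=\delta$ forces $\mathrm{sgn}(\sigma)=1$; when $-1=1$ this gives no information, every $\sigma\in G$ fixes $\delta$, so $\delta\in K$ and $\Delta$ is a square in $K$ regardless of $G$. Indeed the statement itself fails in characteristic $2$: for a separable cubic such as $x^3+sx+t$ over $\mathbb{F}_2(s,t)$ the discriminant $-4s^3-27t^2=t^2$ is always a square, yet the Galois group is $\mathcal{S}_3$; the correct dichotomy there uses the Artin--Schreier (Berlekamp) resolvent rather than the discriminant. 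Since the paper only invokes Theorem \ref{3.4} in characteristic $3$ (Lemma \ref{3.5}), nothing downstream is affected, but your proof should either carry the hypothesis $\mathrm{char}(K)\neq 2$ explicitly or treat characteristic $2$ separately.
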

If $\rmc(K)=3$ and $a_1 = 0$, then $b_1 = 0$ and $b_2 = a_2^3$. 
In this case, 
\[
\Delta=-a_2^3.
\]
\begin{lem}\label{3.5}
	Let \( k \) be an algebraically closed field of $\rmc(k)=3$, and
\[f(x, \y) = x^3 + a_2 x + a_3 \]
	be an irreducible polynomial such that \( a_2, a_3 \in k[\y] \) with \( a_2 \neq 0 \) and \( \deg(a_2) \leq 3 \), and $x, y_1, \ldots, y_n$ are indeterminates.
	Let $K := k(\y)$ and $L := {\rm Quot}(k[x, \y] / (f(x, \y)))$.  
We	assume that \( L / K \) is a Galois extension.  
	Then, there exists a polynomial \( B\in k[\y] \) such that $a_2 = -B^2$
\end{lem}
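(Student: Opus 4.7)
The plan is to deduce the form of $a_2$ from the Galois hypothesis by way of the discriminant, via Theorem \ref{3.4}, and then to descend the resulting squareness condition from $k(\y)$ to $k[\y]$ using unique factorization. First I would verify separability of $f(x,\y)$: since $a_2\neq 0$ and $\rmc(k)=3$, the $x$-derivative equals $3x^2+a_2=a_2\neq 0$, so $f$ is separable in $x$, and because $f$ is irreducible over $K$, the extension $L/K$ is a separable Galois extension of degree $3$.

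Next I would invoke the discriminant computation from the remark following Theorem \ref{3.4}: with the coefficient of $x^2$ being zero and $\rmc(k)=3$, one has $b_1=0$, $b_2=a_2^3$, and hence $\Delta=-a_2^3$. Since $[L:K]=3$, part $(3)$ of Theorem \ref{3.4} cannot hold, so we must be in case $(2)$, meaning $\Delta=-a_2^3$ is a square in $K=k(\y)$. Using $-a_2^3=(-a_2)^3$, this says $(-a_2)^3=C^2$ for some $C\in k(\y)$.

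The second main step is to descend this to $k[\y]$. Writing $C=c_1/c_2$ with coprime $c_1,c_2\in k[\y]$ yields $(-a_2)^3c_2^2=c_1^2$, and for every irreducible $p\in k[\y]$ the right-hand side has even $p$-adic valuation. Since $c_2^2$ contributes an even valuation, $3v_p(-a_2)$ must be even, and because $\gcd(2,3)=1$, $v_p(-a_2)$ itself is even. Hence $-a_2=\alpha h^2$ for some unit $\alpha\in k^{\ast}$ and some $h\in k[\y]$; since $k$ is algebraically closed, $\alpha=\delta^2$ for some $\delta\in k^{\ast}$, and setting $B:=\delta h\in k[\y]$ gives $a_2=-B^2$.

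The conceptual obstacle is bridging the gap between the algebraic fact produced by Theorem \ref{3.4} (that $(-a_2)^3$ is a square in the fraction field) and the polynomial statement we want (that $-a_2$ itself is the square of an element of $k[\y]$). This is precisely where the coprimality of $2$ and $3$, unique factorization in $k[\y]$, and the algebraic closedness of $k$ combine; the rest of the argument is a direct application of the discriminant criterion plus a routine separability check, so I would expect the write-up to be short.
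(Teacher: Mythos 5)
Your proposal is correct, and its first half (separability check, discriminant $\Delta=-a_2^3$ via the remark after Theorem \ref{3.4}, and the observation that $[L:K]=3$ forces case $(2)$, so $\Delta$ is a square in $K$) is exactly the paper's opening move. Where you diverge is in the descent from ``$(-a_2)^3$ is a square in $k(\y)$'' to ``$-a_2$ is a square in $k[\y]$ up to a unit.'' The paper first argues $b\in k[\y]$, then uses the degree identity $3\deg(a_2)=2\deg(b)$ together with the hypothesis $\deg(a_2)\leq 3$ to reduce to $\deg(a_2)\in\{0,2\}$, and in the degree-$2$ case writes $a_2$ as a product of two \emph{linear} factors and compares them with the factors of $b$ one by one. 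Your argument instead clears denominators and compares $p$-adic valuations at every irreducible $p$ of the UFD $k[\y]$, concluding that $v_p(-a_2)$ is even from $\gcd(2,3)=1$. This buys two things: you never use the hypothesis $\deg(a_2)\leq 3$ (so your lemma is strictly more general), and you avoid the paper's tacit assumption that a degree-$2$ element of $k[y_1,\ldots,y_n]$ splits into linear factors, which is automatic only for $n=1$; for $n\geq 2$ the valuation argument is the right tool, and your write-up handles all cases uniformly. The final step (absorbing the unit $\alpha$ into the square using algebraic closedness of $k$) matches the paper's treatment of the constant case.
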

\begin{proof}
	Since \( L / K \) is a Galois extension, by Theorem \ref{3.4}, there exists $b \in K$ such that 
	\[-a_2^3 =b^2.\]  
Since $a_2 \in k[\y]$, it follows that 
	\[b \in k[\y].\]  
Since $-a_2^3 =b^2$, we have $3{\rm deg}(a_2)=2{\rm deg}(b)$.
Since \( \deg(a_2) \leq 3 \), we obtain \( \deg(a_2) = 0 \) or \( 2 \).
	If \( \deg(a_2) = 0 \), then \( a_2 \in k \). Since \( k \) is algebraically closed, there exists \( B \in k \) such that \( a_2 = -B^2 \).  
We assume that \( \deg(a_2) = 2 \).  
	Let 
\[a_2 = \prod_{i=1}^2 s_i(\y)\quad {\rm and}\quad b = \prod_{j=1}^3 t_j(\y)\]  
	where \( s_i(\y), t_j(\y) \in k[\y] \) are linear polynomials for \( i = 1, 2 \) and \( j = 1, 2, 3 \).  
We assume \( s_1(\y) \) is not associated to \( s_2(\y) \), i.e. $ts_1(\y)\neq s_2(\y)$ for any $k\in k$.
Since \( k[\y] \) is a unique factorization domain and \( b^2 = -a_2^3 \), we may assume, after reordering indices if necessary, that \( s_1(\y) \) is associated to \( t_1(\y) \) and \( s_2(\y) \) is associated to \( t_j(\y) \) for \( j = 2, 3 \).  
	However, from \( b^2 = -a_2^3 \), it follows that  
	\[
	s_1(\y)^2 s_2(\y)^4 = -u s_1(\y)^3 s_2(\y)^3
	\]
	for some \( u \in k \).  
	This contradicts the fact that \( k[\y] \) is a unique factorization domain.  
	Thus, \( s_1(\y) \) is associated to \( s_2(\y) \).  
	It follows that 
	\[a_2 = v s_1(\y)^2 \] for some \( v \in k \).  
	Since \( k \) is algebraically closed, there exists \( w \in k \) such that \( w^2 = -v \).  
	Let \( B := w s_1(\y) \in k[\y] \). Then \( a_2 = -B^2 \).
\end{proof}

\begin{thm}\label{3.6}
Let $k$ be an algebraically closed filed of $\rmc(k)=3$, 
	let $P:=[1:0:\cdots:0]\in\mb P^{n+1}(k)$ be a point, 
	and let $\mac X\subset \mb P^{n+1}(k)$ be an irreducible hypersurface of degree $4$
	such that $\mac X$ is smooth at $P$.
	We express the defining equation $F(X_0,\ldots, X_{n+1})$ of $\mac X$ as 
	\[F(X_0,\ldots, X_{n+1})=\sum_{i=1}^4F_iX_0^{4-i}\]
	where $F_i\in k[X_1,\ldots, X_{n+1}]_i$ for $i=1,\ldots,4$.
If $P$ is an inner Galois point for $\mac X$ and $F_2=0$,
then $P$ is extendable.
\end{thm}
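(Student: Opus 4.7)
The plan is to normalize the defining equation so as to apply Lemma \ref{3.5} and then invoke Theorem \ref{33}. Since $\mac X$ is smooth at $P$, the linear form $F_1(\X)$ is non-zero, so there exists $A \in \mathrm{GL}(n+1, k)$ with $A^{\ast}F_1(\X) = X_{n+1}$. Setting $M := \begin{pmatrix} 1 & 0 \\ 0 & A \end{pmatrix}$ and replacing $F$ by $M^{\ast}F$, the hypothesis $F_2 = 0$ is preserved and $P$ remains an inner Galois point (extendable or not), since $M$ fixes $P$ and induces a projective change of coordinates. I may therefore assume $F_1(\X) = X_{n+1}$.

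I would then dehomogenize at $X_{n+1} = 1$: writing $f(x_0, \y) := F(x_0, y_1, \ldots, y_n, 1)$ and $f_i(\y) := F_i(y_1, \ldots, y_n, 1)$, the polynomial takes the form
\[
f(x_0, \y) = x_0^3 + f_3(\y)\, x_0 + f_4(\y),
\]
with $\deg f_3 \leq 3$. Setting $K := k(\y)$ and $L := \mathrm{Quot}(k[x_0, \y]/(f))$, the hypothesis that $P$ is an inner Galois point says that $L/K$ is Galois. If $f_3 = 0$, then $f = x_0^3 + f_4 = (x_0 - \beta)^3$ in $\ol K[x_0]$ with $\beta^3 = -f_4$, because $\rmc(k) = 3$; irreducibility of $f$ in $K[x_0]$ then forces $L = K(\beta)$ to be purely inseparable of degree three, contradicting the Galois assumption. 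Hence $f_3 \neq 0$.

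Now I would invoke Lemma \ref{3.5} with $a_2 = f_3$ and $a_3 = f_4$: the Galois hypothesis produces a polynomial $\tilde{G} \in k[\y]$ with $f_3 = -\tilde{G}^2$. Since $2\deg \tilde{G} = \deg f_3 \leq 3$ is necessarily even, one gets $\deg \tilde{G} \leq 1$. Writing $\tilde{G} = c_0 + \sum_{i=1}^n c_i y_i$ and setting $G := c_0 X_{n+1} + \sum_{i=1}^n c_i X_i \in k[\X]_1$, the identity $f_3 = -\tilde{G}^2$ rehomogenizes to
\[
F_3(\X) = -X_{n+1}\, G^2 = -G^2 F_1(\X)
\]
in $k[\X]_3$, with $G \neq 0$ since $f_3 \neq 0$. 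Together with $F_2 = 0$, this is exactly condition (1) of Theorem \ref{33}, so $P$ is an extendable inner Galois point for $\mac X$.

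The main obstacle is the bridge from the abstract Galois property of $L/K$ to the concrete squared identity $f_3 = -\tilde{G}^2$ in the polynomial ring $k[\y]$. This is supplied by Lemma \ref{3.5}, which relies on the cubic discriminant computation of Theorem \ref{3.4} together with unique factorization in $k[\y]$. Its applicability hinges on $f_3$ being an actual polynomial of degree at most three; this is exactly what the preliminary normalization $F_1 = X_{n+1}$ guarantees, since without it the relevant coefficient $f_3/f_1$ would only live in $K$, and the parity argument that forces $\tilde{G}$ to be linear would be unavailable.
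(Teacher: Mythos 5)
Your proposal is correct and follows essentially the same route as the paper: reduce to Lemma \ref{3.5} to produce the square factorization $F_3=-G^2F_1$ and then conclude via part $(1)$ of Theorem \ref{33}. The paper's own proof is only these two invocations; your normalization $F_1=X_{n+1}$, the exclusion of the purely inseparable case $f_3=0$, and the parity/rehomogenization step are exactly the details it leaves implicit, and they are all handled correctly.
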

\begin{proof}
By lemma \ref{3.5}, 
there exist a form $G\in k[\X]$ such that 
\[F_3=-G^2F_1.\]
By part $(1)$ of Theorem \ref{33},
it follows that the point $P$ is an inner extendable Galois point for $\mac X$.  	
\end{proof}

\section{Proof of parts $(3)$ and $(4)$ of Theorem \ref{1.1}}

\begin{lem}\label{4.1}
	Let \( k \) be a field of \(\rmc(k)\neq 2 \), and 
\[ f(x, y) = x^4 + ax^2 + bx + c \]  
be an irreducible polynomial where $a,b,c \in k[\y]$ and $x, y_1, \ldots, y_n$ are indeterminates.
We set 
\[K := k(\y)\quad {\rm and}\quad L := {\rm Quot}(k[x, \y] / (f(x, \y))).\]	
We assume that there is a non-trivial automorphism $g \in {\rm Aut}_K(L)$ such that 
\[g(x) = Ax + B \]  
	where $A \in k$ with $A^2\neq1$, \( A^4 = 1 \), and $B\in k[\y]$.  
Then, we have $B = a= b= 0$.
\end{lem}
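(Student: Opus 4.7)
The strategy mirrors the proofs of Lemmas~\ref{3.1} and~\ref{3.2}: since $g$ is a $K$-automorphism of $L$ sending $x$ to $Ax+B$, the element $Ax+B$ must satisfy the same minimal polynomial $f$ over $K$, so the identity $f(Ax+B,\y)=0$ holds in $L$. Expanding $(Ax+B)^4+a(Ax+B)^2+b(Ax+B)+c$ using the binomial theorem and subtracting the relation $x^4+ax^2+bx+c=0$ (which is possible because $A^4=1$, so the $x^4$ terms cancel) yields a polynomial expression of degree at most $3$ in $x$ that vanishes in $L$.

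Since $f$ is irreducible of degree $4$, the elements $1,x,x^2,x^3$ form a $K$-basis of $L$, so each coefficient of this expression must vanish. This produces four equations in $a,b,c,A,B$: the $x^3$-coefficient $4A^3B=0$, the $x^2$-coefficient $(6B^2+a)A^2-a=0$, the $x^1$-coefficient $(4B^3+2aB+b)A-b=0$, and the constant coefficient $B^4+aB^2+bB=0$.

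From the $x^3$-equation, using $\mathrm{char}(k)\neq 2$ (so $4\neq 0$) and $A\neq 0$ (since $A^4=1$), I conclude $B=0$. Substituting into the $x^2$-equation gives $a(A^2-1)=0$, and since $A^2\neq 1$ by hypothesis, $a=0$. Substituting $B=a=0$ into the $x^1$-equation yields $b(A-1)=0$, and since $A\neq 1$ (again because $A^2\neq 1$), we obtain $b=0$. The constant-coefficient equation is then automatically satisfied. This proves $B=a=b=0$.

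There is no real obstacle here; the proof is a direct coefficient comparison, with the only subtlety being careful tracking of which scalars are nonzero in $k$ (namely $4$, $A$, $A^2-1$, and $A-1$), all of which are guaranteed by the hypotheses $\mathrm{char}(k)\neq 2$ and $A^2\neq 1$.
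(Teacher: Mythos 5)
Your proof is correct and follows essentially the same route as the paper: expand $f(Ax+B,\y)$, subtract the relation $f(x,\y)=0$, compare coefficients against the basis $1,x,x^2,x^3$, and solve the resulting system using $4\neq 0$, $A\neq 0$, and $A^2\neq 1$. The four equations you obtain are exactly those in the paper's proof, and your derivation of $B=0$, then $a=0$, then $b=0$ matches the paper step for step (you are in fact slightly more careful in flagging that $\rmc(k)\neq 2$ is needed to conclude $4\neq 0$).
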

\begin{proof}
Since \( f(g(x), \y) = 0 \) and  $x^4 + ax^2 + bx + c=0$ , we have
\[
\begin{aligned}
0=&(Ax+B)^4+a(Ax+B)^2+b(Ax+B)+c\\
=&x^4 + 4A^3Bx^3 + (6A^2B^2 + aA^2)x^2\\ 
&+(4AB^3 + 2aAB + bA)x +B^4 + aB^2 + bB + c\\
=&4A^3Bx^3 + (6A^2B^2 + aA^2-a)x^2\\ 
&+(4AB^3 + 2aAB + bA-b)x +B^4 + aB^2 + bB.
\end{aligned}
\]
Since $1, x, x^2,x^3$ form a basis of $L$ over $K$, 
we obtain the following equations:
\begin{equation}\label{y1}
	\begin{cases}
		\begin{split}
0=&4A^3B,\\
0=&6A^2B^2 + aA^2-a, \\
0=&4AB^3 + 2aAB + bA-b, \\
0=&B^4 + aB^2 + bB.
\end{split}
\end{cases}
\end{equation}
From the first equation in $(\ref{y1})$, since \( A \neq 0 \), we have $B = 0$.  
By substituting \( B = 0 \) into the second equation in $(\ref{y1})$, we get $a(A^2-1)=0$.  
	Since \( A^2 \neq 1 \), it follows that \( a= 0 \).  
Similarly, by substituting \( B = 0 \) into the third equation in $(\ref{y1})$,
we get $b(A-1)$.  
Since $A \neq 1$, we conclude that $b= 0$.
\end{proof}
If $\rmc(k)=2$, then for $a\in k$ we have $a^4=1$ if and only if $a=1$.
\begin{lem}\label{4.2}
	Let \( k \) be a field of \( \rmc(k)=2 \), and  
\[f(x, \y) = x^4 + a x^3 + b x^2 + c x + d \]  
be an irreducible polynomial where $a, b, c, d \in k[\y]$ and $x, y_1, \ldots, y_n$ are indeterminates.  
We set 
\[K := k(\y)\quad {\rm and}\quad L := {\rm Quot}(k[x, \y] / (f(x, \y))).\]	
We assume that $f(x,\y)$ is separable, and there is a non-trivial automorphism $g \in {\rm Aut}_K(L)$ such that 
\[ g(x) =x + B\]
 where $B \in k[\y]$ is a polynomial of degree at most \( 1 \).  
	Then, the order of $g$ is two, and we have $a=0$ and $B^3 + b B + c = 0$.
\end{lem}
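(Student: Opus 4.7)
The plan is to follow the template already set by Lemmas \ref{3.1}, \ref{3.2}, and \ref{4.1}: substitute $g(x) = x + B$ into the defining relation $f(x,\y) = 0$ and compare coefficients in the $K$-basis $\{1, x, x^2, x^3\}$ of $L$.

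First I would note that since $g \in \mathrm{Aut}_K(L)$ fixes $K$ and $f(x,\y) = 0$ in $L$, we have $f(x+B, \y) = g(f(x,\y)) = 0$ in $L$. Using $\rmc(k)=2$, the binomial expansions simplify to
\[
(x+B)^4 = x^4 + B^4, \quad (x+B)^3 = x^3 + Bx^2 + B^2 x + B^3, \quad (x+B)^2 = x^2 + B^2,
\]
so after subtracting the relation $x^4 + a x^3 + b x^2 + c x + d = 0$, the identity $f(x+B,\y) - f(x,\y) = 0$ collapses to
\[
aB\, x^2 + aB^2\, x + (B^4 + aB^3 + bB^2 + cB) = 0.
\]

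Next, since $f$ is irreducible of degree $4$, $\{1, x, x^2, x^3\}$ is a $K$-basis of $L$, so each coefficient in the display above must vanish. Because $g$ is non-trivial we have $B \neq 0$; the coefficient of $x^2$ then forces $a = 0$, and the constant term becomes $B(B^3 + bB + c) = 0$, from which cancelling $B$ gives $B^3 + bB + c = 0$.

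Finally, to pin down the order of $g$, I would observe that $B \in K$ is fixed by $g$, so $g^2(x) = g(x) + g(B) = (x+B) + B = x + 2B = x$ in characteristic $2$; combined with $g \neq \mathrm{id}$ this yields that the order of $g$ is exactly $2$. No serious obstacle is anticipated here: the only subtle point is correctly reducing the relevant binomial coefficients modulo $2$, and separability of $f$ is used only implicitly to ensure that the basis argument and the automorphism setup behave as expected.
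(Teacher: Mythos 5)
Your proposal is correct and follows essentially the same route as the paper's own proof: substitute $g(x)=x+B$, reduce the binomial coefficients modulo $2$, compare coefficients in the basis $\{1,x,x^2,x^3\}$ to force $aB=0$ and $B^4+aB^3+bB^2+cB=0$, and conclude $a=0$ and $B^3+bB+c=0$ after cancelling $B\neq 0$, with the order-two claim following from $g^2(x)=x+2B=x$ in characteristic $2$. No substantive differences.
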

\begin{proof}
Since $g$ is a non-trivial automorphism, we obtain $B\neq0$.
Since $\rmc(k)=2$, we have $g\circ g(x)=x$.
Thus, the order of $g$ is two.
Since \( f(g(x), \y) = 0 \) and $x^4 + a x^3 + b x^2 + c x + d=0$, we have
\begin{dmath*}
0=(x + B)^4 + a (x+B)^3 + b (x+ B)^2 + c (x + B) + d
=(x^4 + B^4) + a (x^3 + B x^2 + B^2 x + B^3) + b (x^2 + B^2) + c (x + B) + d
=x^4 + ax^3 + (aB + b) x^2 + (aB^2 + c) x + B^4 + a B^3 + b B^2 + c B + d
=aBx^2 +aB^2x +B^4 + a B^3 + b B^2 + c B.
\end{dmath*}
Since $1, x, x^2,x^3$ form a basis of $L$ over $K$, 
we obtain the following equations:
\begin{equation}\label{y2}
	\begin{cases}
		\begin{split}
0&=aB,\\
0&=aB^2, \\
0&=B^4 + a B^3 + b B^2 + c B.
		\end{split}
\end{cases}
\end{equation}
Since $B\neq0$, from the first equation in $(\ref{y2})$, we have $a=0$.
From the third equation in $(\ref{y2})$, we obtain $B^4 + b B^2 + cB = 0$.
Since $B\neq 0$, we get $B^3+ b B+ c= 0$. 
\end{proof}
\begin{thm}\label{43}
	Let $k$ be an algebraically closed filed, 
	let $P:=[1:0:\cdots:0]\in\mb P^{n+1}(k)$ be a point, 
	and let $X\subset \mb P^{n+1}(k)$ be an irreducible hypersurface of degree $4$.
We express the defining equation $F(X_0,\ldots, X_{n+1})$ of $X$ as 
\[F(X_0,\ldots, X_{n+1})=X_0^4+\sum_{i=1}^4F_iX_0^{4-i}\]
	where $F_i\in k[X_1,\ldots, X_{n+1}]_i$ for $i=1,\ldots,4$.
We assume that $P\not\in \mac X$. 
The point $P$ being an extendable outer Galois point for $\mac X$ is equivalent to the following condition holding:
\begin{enumerate}
\item[$(1)$]If $\rmc(k)=2$, then 
\[F_1=0,\]
and for the polynomial $T^3 + F_2T + F_3$ where $T$ is an indeterminate,
there exist forms \( B_i \in k[\X] \) with $B_i\neq0$ for $i=1,2,3$ such that 
\begin{align*}
	\begin{split}
		T^3 + F_2T +F_3=(T-B_1)(T-B_2)(T-B_3).
	\end{split}
\end{align*}
In this case, the Galois group of $k(\mathcal{X})/k(\mathbb{P}^n(k))$ is the direct product of two cyclic groups of order 2.
\item[$(2)$]If $\rmc(k)\neq2$, then 
\[3F_1^2-8F_2=0\quad\mathrm{and}\quad F_1^3-16F_3=0.\]
In this case,  the Galois group of $k(\mathcal{X})/k(\mathbb{P}^n(k))$ is a cyclic group of order $4$.
\end{enumerate}
\end{thm}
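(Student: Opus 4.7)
The plan is to split along the characteristic of $k$ and translate the condition that $P$ is an extendable outer Galois point into the existence of certain $K$-automorphisms of $L$ of the form $x_0 \mapsto \alpha x_0 + \beta$ with $\alpha \in k^\times$ and $\beta \in k[x_1,\ldots,x_n]$ of degree at most one, as developed at the start of Section 4. Lemmas \ref{4.1} and \ref{4.2} then furnish the constraints these automorphisms impose on the minimal polynomial $f$ of $x_0$ over $K$.

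For part (4), where $\rmc(k) \neq 2$, I first depress $f$ via the substitution $y := x_0 + F_1/4$, obtaining $g(y) = y^4 + Ay^2 + By + C$ with $A = F_2 - 3F_1^2/8$ and $B = F_3 - F_1 F_2/2 + F_1^3/8$. Since $\rmc(k)$ is coprime to $d = 4$, Proposition \ref{2.6} applies and the Galois group is cyclic of order $4$; its generator $\sigma$ sends $y \mapsto \alpha y + \gamma$ with $\alpha \in k$ a primitive $4$-th root of unity and $\gamma \in k[x_1,\ldots,x_n]$ of degree at most one. Since $\alpha^2 = -1 \neq 1$, Lemma \ref{4.1} forces $\gamma = A = B = 0$, giving the two explicit polynomial equations in $F_1, F_2, F_3$ stated in (4). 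Conversely, if both equations hold, then $A = B = 0$, so the matrix in $\mathrm{GL}(n+2,k)$ implementing $X_0 \mapsto X_0 + (F_1/4) X_{n+1}$ transforms $F$ into $X_0^4 + H(\X)$, and Proposition \ref{2.6} delivers the extendable outer Galois conclusion with cyclic Galois group of order $4$.

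For part (3), where $\rmc(k) = 2$, Proposition \ref{2.6} is unavailable since $\rmc(k)$ divides $d = 4$, so I argue directly. Any $\sigma \in G_{\pi_P}$ is induced by a matrix fixing $P$ and acting as the identity on $\mathbb{P}^n_k$, hence acts on affine coordinates as $\sigma(x_0) = \alpha x_0 + B_\sigma$ with $\alpha \in k^\times$ satisfying $\alpha^4 = 1$ and $B_\sigma \in k[x_1,\ldots,x_n]$ of degree at most one. In characteristic $2$, $\alpha^4 = 1$ forces $\alpha = 1$, so $\sigma(x_0) = x_0 + B_\sigma$ and $\sigma^2 = \mathrm{id}$; combined with $|G_{\pi_P}| = 4$ this gives $G_{\pi_P} \cong (\mathbb{Z}/2\mathbb{Z})^2$. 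Applying Lemma \ref{4.2} to the three non-identity elements (separability of $f$ is automatic from $L/K$ being Galois) yields $F_1 = 0$ and $B_i^3 + F_2 B_i + F_3 = 0$ for $i = 1, 2, 3$, whence the factorization $T^3 + F_2 T + F_3 = (T-B_1)(T-B_2)(T-B_3)$ with three distinct non-zero roots. Conversely, given $F_1 = 0$ and such a factorization, direct expansion in characteristic $2$ gives $f(x_0 + B_i) = B_i(B_i^3 + F_2 B_i + F_3) = 0$, so each map $g_i : x_0 \mapsto x_0 + B_i$ extends to a $K$-automorphism of $L$, realized on $\mathcal{X}$ by the matrix $X_0 \mapsto X_0 + B_i$ (acting identically on the other variables). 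The Vieta identity $B_1 + B_2 + B_3 = 0$ from the vanishing $T^2$ coefficient gives $g_1 g_2 = g_3$, so $\{1, g_1, g_2, g_3\} \subset \mathrm{Aut}_K(L)$ has order $4 = \deg \pi_P$, hence equals $G_{\pi_P}$, and $P$ is extendable Galois with the required group structure.

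The main obstacle is the characteristic $2$ case, where Proposition \ref{2.6} cannot be invoked and one must extract both the Klein four-group structure of $G_{\pi_P}$ and the closure of the three translations from the diagonal-entry constraint together with the Vieta relations. A subsidiary point is that the roots $B_i$ are forms of degree one: this follows from the quasi-homogeneity of $T^3 + F_2 T + F_3$ under $T \mapsto tT$, $X_j \mapsto tX_j$, so any splitting over $k[\X]$ consists of linear forms.
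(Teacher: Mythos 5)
Your proposal is correct and takes essentially the same route as the paper: for $\rmc(k)\neq 2$ both arguments depress the quartic by the shift $x_0\mapsto x_0-\tfrac{1}{4}F_1$ and force the quadratic and linear coefficients to vanish (you via Lemma \ref{4.1}, the paper via Proposition \ref{2.7} applied after the same substitution), and for $\rmc(k)=2$ both observe that $s_{11}=1$ makes every nontrivial element of $G_{\pi_P}$ an order-two translation $x_0\mapsto x_0+B_i$, invoke Lemma \ref{4.2} to obtain $F_1=0$ and the factorization of $T^3+F_2(\X)T+F_3(\X)$, and reverse the construction using the three translation matrices. One caveat: your correctly computed depressed linear coefficient $F_3-\tfrac{1}{2}F_1F_2+\tfrac{1}{8}F_1^3$, combined with $8F_2=3F_1^2$, yields $F_1^3-16F_3=0$ rather than the displayed $9F_1^3-32F_3=0$, so it does not literally produce the second equation of part $(2)$ as you assert; this numerical mismatch traces back to the paper's own expansion $(\ref{i1})$ rather than to any flaw in your method.
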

\begin{proof}
Since $P\not\in\mac X$, $F_0\neq0$.
For simplicity, we may assume that $F_0=1$.
We assume that $\rmc(k)=2$.
If $P$ is an extendable outer Galois point for $\mac X$, then
$g\in G_{\pi_P}$ is induced by a matrix of the form:
\[
\begin{pmatrix} 
	1 & s_{12} & \dots  & s_{1\,n+2} \\ 
	0 & 1 & \dots  & 0 \\ 
	\vdots & \vdots & \ddots & \vdots \\ 
	0 & 0 & \dots  & 1
\end{pmatrix}.
\]
We set $B:=\sum_{i=1}^{n+1}s_{1\,i+1}X_i$.
By Lemma \ref{4.2}, we have
$F_1=0$ and 
$B^3+F_2B+F_3=0$.
Since $[k(\mac X):k(\mb P^n(k))]=4$, by Lemma \ref{4.2}, it follows that $G_{\pi_P}$ is the direct product of two cyclic groups of order $2$.
Then, there exist non-zero forms $B_i$
$ \in k[\X]$ for $i=1,2,3$ such that 
\begin{align*}
	\begin{split}
		T^3 + F_2T +F_3=(T-B_1)(T-B_2)(T-B_3)
	\end{split}
\end{align*}
where $T$ is an indeterminate.

Conversely, we assume that 
\[F_1=0\] and 
there exist non-zero forms \( B_i \in k[\X] \) for $i=1,2,3$ such that 
\begin{align*}
	\begin{split}
		T^3 + F_2T +F_3=(T-B_1)(T-B_2)(T-B_3)
\end{split}
\end{align*}
where $T$ is an indeterminate.
Then 
\[B_i^3+F_2B_i+F_3=0\]
for $i=1,2,3$.
It follows that
\begin{dmath}\label{eq10}
\sum_{j=0,2,3,4}F_j\bigl(X_0+B_i\bigr)^{4-j}=\bigl(X_0^4+B_i^4\bigr)+F_2\bigl(X_0^2+B_i^2\bigr)+F_3\bigl(X_0+B_i\bigr)+F_4(\X)=\sum_{j=0,2,3,4}F_jX_0^{4-j}+B^4_i+F_2B^2_i+F_3B
	=\sum_{j=0,2,3,4}F_jX_0^{4-j}
\end{dmath}
for $i=1,2,3$.
	We set $B_i=\sum_{j=1}^{n+1}a_{ij}X_j$ where $a_{ij}\in k$ for $i=1,2,3$ and $j=1,\ldots, n+1$.
Let $A_i:=(a(i)_{s\,t})\in{\rm GL}(n+2,k)$ be a matrix such that 
\begin{equation*}
	a(i)_{s\,t}=
	\begin{cases}
		a_{i\,t-1}&{\rm if}\ s=1\ {\rm and}\ t\geq 2, \\
		1&{\rm if}\ s=t,\\
		0&{\rm otherwise}.
	\end{cases}
\end{equation*}
for $i=1,2,3$.
By the equation $(\ref{eq10})$,
we get that
the matrix \( A_i \) defines an automorphism $g_{A_i}$ of \( X \) for $i=1,2,3$. 
Since $\rmc(k)=2$, the order of $g_{A_i}$ is also $2$. 
Moreover, Since the matrix \( A_i \) acts trivially on the components \( X_1 \) through \( X_{n+1} \), 
\[\pi_P\circ g_{A_i}=\pi_P\]
where $\pi_P\co \mac X\dashrightarrow\mb P^n(k)$ is the projection with center $P$.
Then $|G_{\pi_4}|=4$, and hence $P$ is an extendable inner Galois point for $\mac X$.
Since the order of $g_{A_i}$ is $2$ for $i=1,2,3$, we have $G_{\pi_P}$ is the direct product of two cyclic groups of order $2$.

We assume that $\rmc(k)\neq 2$.
We assume that $P$ is an extendable outer Galois point for $\mac X$.
We set $F_1=\sum_{i=1}^{n+1}a_iX_i$.
Let $A:=(a_{ij})\in {\rm GL}(n+2,k)$ be the matrix defined by
\begin{equation*}
	a_{ij}=
	\begin{cases}
		-\frac{a_{j-1}}{4}&{\rm if}\ i=1\ {\rm and}\ j\geq 2, \\
		1&{\rm if}\ i=j,\\
		0&{\rm otherwise}.
	\end{cases}
\end{equation*}
Then, we have 
\begin{equation}\label{i1}
	\begin{split}
	A^{\ast}F(X_0,\X)=&\left(X_0-\frac{F_1}{4}\right)^4+F_1\left(X_0-\frac{F_1}{4}\right)^3+F_2\left(X_0-\frac{F_1}{4}\right)^2\\
&+F_3\left(X_0-\frac{F_1}{4}\right)+F_4\\
=&X_0^4+\left(-\frac{3}{8}F_1^2+F_2\right)X_0^2+\left(\frac{1}{8}F_1^3-\frac{1}{2}F_1F_2+F_3\right)X_0\\
&-\frac{3}{256}F_1^4+\frac{1}{16}F_1^2F_2-\frac{1}{4}F_1F_3+F_4.
\end{split}
\end{equation}
By Proposition \ref{2.7}, we have
\begin{equation}\label{a1}
3F_1^2-8F_2=0
\end{equation}
and
\begin{align}\label{a2}
	\begin{split}
	F_1^3-4F_1F_2+8F_3=0.
	\end{split}
\end{align}
By substituting  the equation $(\ref{a1})$ into the equation $(\ref{a2})$, the following equation $(\ref{a3})$ holds:
\begin{equation}\label{a3}
F_1^3-16F_3=0.
\end{equation}

Conversely, we assume that the equations $(\ref{a1})$ and $(\ref{a3})$ hold.
From the equations $(\ref{a1})$ and $(\ref{a3})$, we have the equation $(\ref{a2})$.
By using $F_1=\sum_{i=1}^{n+1}a_iX_i$,
we define the matrix $A$ as before. 
By the equations $(\ref{i1})$, $(\ref{a1})$, and $(\ref{a2})$, we have 
\[A^{\ast}F(X_0,\X)=X_0^4+H_4\]
where $H_4\in k[\X]_4$.
By Proposition \ref{2.6}, it follows that $P$ is an extendable outer Galois point for $\mathcal{X}$.	
\end{proof}
Let \( k \) be an algebraically closed field of \( \mathrm{char}(k) = 2 \), and
\( \mathcal{X} \subset \mathbb{P}^{n+1}(k) \) be an irreducible  quartic hypersurface 
defined as the zero locus of the polynomial
\[
F(X_0, \ldots, X_{n+1}) =X_0^4 + \sum_{i=2}^4 F_iX_0^{4-i},
\]
where \( F_i\in k[\X]_i \) for \( i = 2, 3, 4 \). 
In what follows, we show that if \( P := [1:0:\cdots:0] \in \mathbb{P}^{n+1}(k) \) is an outer Galois point for \( \mathcal{X} \)
such that $G_{\pi_P}$ is the direct product of two cyclic groups of order $2$, then \( P \) is an extendable outer Galois point for \( \mathcal{X} \).

The following fact is known for Galois extensions of degree \( 4 \).
This result is standard; see [\ref{bio:ma},\ {\rm Proposition}\ 16.9.8].
\begin{thm}\label{4.4}
	Let $K$ be a field, and let $f(x) = x^4 + a_1x^3 + a_2x^2 + a_3x + a_4 \in K[x]$ be a separable irreducible polynomial. 
	Let $L$ be the splitting field of $f(x)$ over $K$, and $G$ be the Galois group of $L/K$.
	Define
	\[
	b_1 := -a_2, \quad b_2 := a_1a_3 - 4a_4, \quad b_3 := -a_4(a_1^2 - 4a_2) - a_3^2.
	\]
	Let $g(t)$ be the resolvent cubic of $f$.
	Then, we have the following:
	\begin{enumerate}
		\item[$(1)$]$g(t)=t^3 + b_1t^2 + b_2t + b_3$,
		\item[$(2)$]If $g(t)= (t - a)(t - b)(t - c)\ \text{where } a, b, c \in K$, then $G$ is the direct product of two cyclic groups of order $2$,
		\item[$(3)$]If $g(t)$ has only one root in $K$, then $G$ is a dihedral group $D_4$ or a cyclic group of order $4$,
		\item[$(4)$]If $g(t)$ is irreducible over $K$, then $G$ is a a symmetric group $\mac S_4$ or an alternating group $A_4$.
	\end{enumerate}
\end{thm}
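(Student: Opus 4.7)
The plan is to exploit the standard resolvent construction: label the four roots of $f$ in $L$ as $\alpha_1, \alpha_2, \alpha_3, \alpha_4$ (which are distinct because $f$ is separable) and introduce the three resolvent elements
\[
\beta_1 := \alpha_1\alpha_2 + \alpha_3\alpha_4, \quad \beta_2 := \alpha_1\alpha_3 + \alpha_2\alpha_4, \quad \beta_3 := \alpha_1\alpha_4 + \alpha_2\alpha_3.
\]
Since the multiset $\{\beta_1,\beta_2,\beta_3\}$ is invariant under the natural $S_4$-action on $\{\alpha_1,\ldots,\alpha_4\}$, the coefficients of $g(t):=(t-\beta_1)(t-\beta_2)(t-\beta_3)$ are symmetric polynomials in the $\alpha_i$ and hence lie in $K$. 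Part $(1)$ is then a bookkeeping exercise: expand $\beta_1+\beta_2+\beta_3$, $\beta_1\beta_2+\beta_1\beta_3+\beta_2\beta_3$ and $\beta_1\beta_2\beta_3$, and rewrite the results using the elementary symmetric functions $e_1(\alpha)=-a_1,\ e_2(\alpha)=a_2,\ e_3(\alpha)=-a_3,\ e_4(\alpha)=a_4$. One obtains $\beta_1+\beta_2+\beta_3=a_2$, $\sum_{i<j}\beta_i\beta_j=a_1a_3-4a_4$, and $\beta_1\beta_2\beta_3=a_1^2a_4-4a_2a_4+a_3^2$, which yields exactly $g(t)=t^3+b_1t^2+b_2t+b_3$.

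For parts $(2)$--$(4)$, since $f$ is irreducible, $G$ embeds as a transitive subgroup of $S_4$; the classification of transitive subgroups gives the five possibilities $S_4$, $A_4$, $D_4$, $V=\{e,(12)(34),(13)(24),(14)(23)\}$, and a cyclic group $C_4$ of order $4$. The key observation is that the pointwise stabilizer in $S_4$ of $\{\beta_1,\beta_2,\beta_3\}$ is precisely $V$, while the setwise stabilizer of each individual $\beta_i$ is a conjugate of $D_4$. Consequently, the orbits of $G$ on $\{\beta_1,\beta_2,\beta_3\}$, which correspond exactly to the irreducible factors of $g(t)$ over $K$ via the fundamental theorem of Galois theory, determine $G$ up to the dichotomies in the statement.

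Concretely, I would argue as follows: if $g$ splits completely in $K$, then $G$ fixes each $\beta_i$, so $G\subseteq V$, and transitivity forces $G=V$, giving part $(2)$; if $g$ has exactly one root in $K$, then $G$ fixes one $\beta_i$ and acts transitively on the other two, so $G$ is contained in a conjugate of $D_4$ but not in $V$, leaving $G=D_4$ or $G=C_4$, which is part $(3)$; and if $g$ is irreducible over $K$, then $G$ acts transitively on $\{\beta_1,\beta_2,\beta_3\}$, so $|G|$ is divisible by $3$, leaving $G=S_4$ or $G=A_4$, which is part $(4)$.

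The conceptual content is entirely in showing that the stabilizer of $\beta_1$ in $S_4$ has order $8$ (the dihedral group $\{e,(12),(34),(12)(34),(13)(24),(14)(23),(1324),(1423)\}$) while the pointwise stabilizer of all three $\beta_i$ equals $V$, and this is a routine check by listing permutations. The main obstacle is the explicit expansion in part $(1)$, especially the identity $\beta_1\beta_2\beta_3=a_1^2a_4-4a_2a_4+a_3^2$, which requires care with cancellations; but it is a finite symbolic computation and carries no conceptual difficulty. Since the result is the standard theorem of Galois theory recorded in \cite{bio:acgh}, I would present part $(1)$ as a verification and derive parts $(2)$--$(4)$ from the transitive subgroup analysis above, noting that separability of $f$ guarantees separability of $g$ and hence that the $\beta_i$ are distinct.
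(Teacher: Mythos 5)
Your argument is correct, but there is nothing in the paper to compare it against: the paper does not prove Theorem \ref{4.4} at all, presenting it as a standard fact with a citation (one that, incidentally, points to the bibliography entry for Arbarello--Cornalba--Griffiths--Harris rather than to a Galois theory reference such as Artin's \emph{Algebra}, which is presumably what was intended). Your proof is the classical resolvent-cubic argument and it is sound: the symmetric-function computations giving $b_1=-a_2$, $b_2=a_1a_3-4a_4$, $b_3=-a_4(a_1^2-4a_2)-a_3^2$ are integral identities and therefore valid in every characteristic, which matters here because the paper applies the theorem over fields of characteristic $2$; and the case analysis via the orbits of $G$ on $\{\beta_1,\beta_2,\beta_3\}$, using that the kernel of the induced map $S_4\to S_3$ is $V$ and that the stabilizer of a single $\beta_i$ is a dihedral group of order $8$, correctly isolates the five transitive subgroups. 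The one step you assert without verification --- that separability of $f$ forces the $\beta_i$ to be distinct, which is what makes the factorization of $g$ over $K$ match the $G$-orbits --- deserves the one-line justification $\beta_1-\beta_2=(\alpha_1-\alpha_4)(\alpha_2-\alpha_3)$ and its two analogues; with that included the proof is complete.
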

If $\rmc(K)=2$ and $a_1 = 0$, then the coefficients simplify as follows:
\[
b_1 = a_2, \quad b_2 = 0, \quad b_3 = a_3^2.
\]
In this case, the cubic polynomial $t^3 + b_1t^2 + b_2t + b_3$ simplifies to 
\[
t^3 + a_2t^2 + a_3^2.
\]
Furthermore, by the change of variables $t = t' + a_2$, the polynomial transforms into 
\[
{t'}^3 + a_2^2t' + a_3^2.
\]
\begin{lem}\label{4.5}
Let \( k \) be an algebraically closed field of $\rmc(k)=2$, and 
\[
f(x, y) = x^4 + a_2 x^2 + a_3 x + a_4
\]	
be an irreducible polynomial
	where \( a_2, a_3, a_4 \in k[\y] \), \( a_3 a_4 \neq 0 \), and \( \deg(a_i) \leq i \) for \( i = 2, 3, 4 \), and $x, y_1, \ldots, y_n$ are indeterminates. 
We set
\[K := k(\y)\quad {\rm and}\quad  L := {\rm Quot}(k[x, \y]/(f(x, \y))).\]
We assume that \( L/K \) is a Galois extension with Galois group is the direct product of two cyclic groups of order $2$.
Then for the polynomial $t^3 + a_2 t + a_3$ where \( t \) is an indeterminate,
there exist non-zero elements \( B_1,B_2,B_3 \in k[\y] \) such that 
\[t^3 + a_2 t + a_3=(t-B_1)(t-B_2)(t-B_3).\]
\end{lem}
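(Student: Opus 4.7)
The plan is to exhibit the three roots of the polynomial $t^3+a_2 t+a_3$ explicitly as pair sums of the roots of $f(x,\y)$, verify that these pair sums are Galois-invariant (so they lie in $K$) and satisfy the correct Vieta relations, and then descend from $K$ down to $k[\y]$ via integrality.

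Concretely, since $L/K$ is Galois of degree $4$, the polynomial $f(x,\y)$ is separable and splits in $L[x]$; denote its roots by $\alpha_1,\alpha_2,\alpha_3,\alpha_4\in L$. Because $f$ is irreducible, the Galois group $G\cong(\mb Z/2\mb Z)^2$ acts transitively on $\{\alpha_1,\ldots,\alpha_4\}$, and since $|G|=4$ this action must be free and transitive, i.e., regular; its three non-identity elements therefore act as the three double transpositions $(12)(34)$, $(13)(24)$, $(14)(23)$. Define $B_i:=\alpha_1+\alpha_{i+1}$ for $i=1,2,3$. Vanishing of the $x^3$-coefficient of $f$ gives $\alpha_1+\alpha_2+\alpha_3+\alpha_4=0$, so in characteristic $2$ each $B_i$ equals the sum over the complementary pair of indices as well. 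Every double transposition either fixes the partition $\{1,i+1\}\sqcup\{j,l\}$ or swaps its two blocks, so $B_i$ is $G$-invariant; hence $B_i\in K$.

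The next step is to check the three Vieta identities for the $B_i$. Using $\sum_j\alpha_j=0$ and $\rmc(k)=2$ one gets $B_1+B_2+B_3=3\alpha_1+(\alpha_2+\alpha_3+\alpha_4)=\alpha_1+\alpha_1=0$. The product $B_1B_2B_3=\prod_{j\neq 1}(\alpha_1-\alpha_j)=f'(\alpha_1)$, but $f'(x)=4x^3+2a_2x+a_3=a_3$ in characteristic $2$, so $B_1B_2B_3=a_3$. For the remaining elementary symmetric function, direct expansion together with cancellation of doubled terms in characteristic $2$ yields $B_1B_2+B_1B_3+B_2B_3=\alpha_1^2+\alpha_2\alpha_3+\alpha_2\alpha_4+\alpha_3\alpha_4$, and substituting $\alpha_2+\alpha_3+\alpha_4=\alpha_1$ into the standard identity $\sum_{i<j}\alpha_i\alpha_j=a_2$ identifies this with $a_2$. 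Therefore
\[
(t-B_1)(t-B_2)(t-B_3)=t^3+a_2 t+a_3.
\]

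Finally, to push the $B_i$ from $K$ down to $k[\y]$, note that each $\alpha_j$ is integral over $k[\y]$, being a root of the monic polynomial $f(x,\y)\in k[\y][x]$, hence so is each $B_i$; combined with $B_i\in K=\mathrm{Frac}(k[\y])$ and the fact that $k[\y]$ is a UFD and therefore integrally closed, we conclude $B_i\in k[\y]$. Non-vanishing $B_i\neq 0$ is then immediate from $B_1B_2B_3=a_3\neq 0$. I expect the main technical obstacle to be the middle-coefficient computation $B_1B_2+B_1B_3+B_2B_3=a_2$, which demands careful bookkeeping in characteristic $2$; the remaining ingredients, namely regularity of the $G$-action on four roots, the derivative formula for $f'(\alpha_1)$, and integral closure of a UFD, are essentially formal.
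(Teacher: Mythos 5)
Your argument is correct, but it reaches the factorization by a genuinely different route than the paper. The paper invokes the standard classification of quartic Galois groups via the resolvent cubic: since $G\cong(\mathbb{Z}/2\mathbb{Z})^2$, the resolvent $t^3+a_2^2t+a_3^2$ splits over $K$ with roots $b_i\in k[\y]$, and the key step is then extracting square roots, via the identity $b_i=\bigl(a_3/(a_2+b_i)\bigr)^2$ forced by $b_i^3+a_2^2b_i+a_3^2=0$ in characteristic $2$, followed by an application of Frobenius to the Vieta relations to descend from $t^3+a_2^2t+a_3^2$ to $t^3+a_2t+a_3$. You instead build the $B_i=\alpha_1+\alpha_{i+1}$ directly from the roots of the quartic, using regularity of the $(\mathbb{Z}/2\mathbb{Z})^2$-action to see that each non-identity element is a double transposition preserving the three pair-partitions, whence $B_i\in K$; your symmetric-function computations ($B_1+B_2+B_3=0$, $\sum_{i<j}B_iB_j=a_2$, $B_1B_2B_3=f'(\alpha_1)=a_3$) all check out, and the descent to $k[\y]$ by integral closure and the non-vanishing from $B_1B_2B_3=a_3\neq0$ are sound. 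The two approaches are secretly the same object viewed from opposite ends: your $B_i^2=(\alpha_1+\alpha_{i+1})(\alpha_j+\alpha_l)$ are exactly the roots $b_i$ of the paper's resolvent cubic, so you construct the square roots at the outset rather than recovering them afterwards. What your version buys is self-containedness (no appeal to the resolvent-cubic theorem or to the square-root extraction trick) and a transparent use of the hypothesis $G\cong(\mathbb{Z}/2\mathbb{Z})^2$ (a cyclic group of order $4$ would act as a $4$-cycle and destroy the invariance of the $B_i$); what the paper's version buys is that it slots directly into the cited general framework (Theorem \ref{4.4}) used elsewhere in Section 5. The only point worth writing out fully in a final version is the middle-coefficient expansion, which you correctly flagged and correctly stated.
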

\begin{proof}
Since \( L/K \) is a Galois extension, there exist \( b_1, b_2, b_3 \in k(\y) \) such that
\begin{equation}\label{c1}
t^3 + a_2^2 t + a_3^2 = (t - b_1)(t - b_2)(t - b_3).
\end{equation}
Then we have
\[b_i^3 + a_2^2 b_i + a_3^2 = 0\]
 for \( i = 1, 2, 3 \). Since \( a_2, a_3 \in k[\y] \),
it follows that \( b_i \in k[\y] \) for \( i = 1, 2, 3 \). 
Since  $b_i^3 + a_2^2 b_i + a_3^2 = 0$ and $\rmc(k)=2$, we obtain
\[ b_i=\Bigr(\frac{a_3}{a_2+b_i}\Bigl)^2 \]
 for $i=1,2,3$.
Since $b_i\in k[\y]$, it follows that $\frac{a_3}{a_2+b_i}\in k[\y]$ for $i=1,2,3$.
Thus, there exists $B_i\in k[\y]$ such that 
\[b_i=B_i^2\]
 for $i=1,2,3$.
By te equation $(\ref{c1})$, we have $t^3 + a_2^2 t + a_3^2 = (t - B_1^2)(t - B_2^2)(t - B_3^2)$.
By the relationship between the roots and coefficients of the polynomial,
we have equations:
\begin{equation*}
	\begin{cases}
		\begin{split}
			0&=B_1^2+B_2^2+B_3^2,\\
			a_2^2&=B_1^2B_2^3+B_2^2B_3^2+B_1^2B_3^2, \\
			a_3^2&=B_1^2B_2^2B_3^2.
		\end{split}
	\end{cases}
\end{equation*}
Since $\rmc(k)=2$, the above equations are as follows:
\begin{equation*}
	\begin{cases}
		\begin{split}
			0&=B_1+B_2+B_3,\\
			a_2&=B_1B_2+B_2B_3+B_1B_3, \\
			a_3&=B_1B_2B_3.
		\end{split}
	\end{cases}
\end{equation*}
Therefore, we get
 \[t^3 + a_2 t + a_3 = (t - B_1)(t - B_2)(t - B_3).\]
\end{proof}
\begin{thm}\label{4.6}
	Let $k$ be an algebraically closed filed of $\rmc(k)=2$, 
	let $P:=[1:0:\cdots:0]\in\mb P^{n+1}(k)$ be a point, 
	and let $\mac X\subset \mb P^{n+1}(k)$ be an irreducible quartic hypersurface.
	We express the defining equation $F(X_0,\ldots, X_{n+1})$ of $X$ as 
\[F(X_0,\ldots, X_{n+1})=\sum_{i=0}^4F_iX_0^{4-i}\]
	where $F_i\in k[X_1,\ldots, X_{n+1}]_i$ for $i=1,\ldots,4$.
	If $P$ is an outer Galois point for $\mac X$ such that $G_{\pi_P}$ is the direct product of two cyclic groups of order $2$,
	and $F_1=0$,
	then $P$ is extendable.
\end{thm}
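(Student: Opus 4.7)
The plan is to mirror the short proof of Theorem 3.6: invoke the field-theoretic splitting Lemma 4.5, then feed the resulting factorization into the sufficient direction of Theorem 43 part $(1)$. The only nontrivial work is verifying that the factors produced at the affine level promote to homogeneous linear forms on $\mb P_k^{n+1}$.

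First I would pass to the affine setup. Since $P\notin \mac X$ I may normalize $F_0(\X)=1$, and by hypothesis $F_1(\X)=0$. Dehomogenizing at $X_{n+1}=1$ gives the monic quartic $f(x_0,\y)=x_0^4+f_2(\y)x_0^2+f_3(\y)x_0+f_4(\y)$, and the assumption $G_{\pi_P}\cong (\mb Z/2\mb Z)^2$ says precisely that the extension $L/K$ described at the start of Section $4$ is Galois with group $(\mb Z/2\mb Z)^2$. Before invoking Lemma 4.5 I must check $f_3 f_4\neq 0$: if $f_3=0$ then in characteristic $2$ the derivative $\partial f/\partial x_0$ vanishes, contradicting separability of the Galois extension; and if $f_4=0$ then $x_0$ divides $f$, contradicting irreducibility of $\mac X$.

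Next, Lemma 4.5 yields non-zero $b_1,b_2,b_3\in k[\y]$ with $t^3+f_2(\y)t+f_3(\y)=(t-b_1)(t-b_2)(t-b_3)$. To promote this to the homogeneous setting, I view $T^3+F_2(\X)T+F_3(\X)$ inside the polynomial ring $k[X_1,\ldots,X_{n+1},T]$ graded so that every variable has degree $1$; with this grading it is homogeneous of degree $3$. Via the inclusion $k(\y)\subset k(\X)$, the affine factorization gives a splitting of $T^3+F_2 T+F_3$ into linear factors in $k(\X)[T]$. Since the cubic is monic with coefficients in the UFD (hence integrally closed) ring $k[\X]$, each root lies in $k[\X]$, producing a factorization $T^3+F_2(\X)T+F_3(\X)=(T-\tilde B_1)(T-\tilde B_2)(T-\tilde B_3)$ in $k[\X][T]$. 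Matching graded degrees in this identity forces each $\tilde B_i$ to be a linear form (using that $F_2,F_3$ are homogeneous of degrees $2,3$ and $\deg T=1$), and $\tilde B_i=0$ would give $F_3=0$, contradicting $f_3\neq 0$.

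Finally, the conditions $F_1(\X)=0$ together with the factorization of $T^3+F_2 T+F_3$ into non-zero linear forms coincide exactly with the sufficient hypothesis of Theorem 43 part $(1)$, and that theorem then yields that $P$ is an extendable outer Galois point. The main obstacle is the lifting step above: one must argue both that the $b_i$ lie in $k[\X]$ (not merely in $k(\X)$) and that they are linear, both of which come from the graded UFD structure of $k[\X,T]$ together with the known homogeneous degrees of $F_2$ and $F_3$.
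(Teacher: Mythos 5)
Your proof follows the paper's route exactly: invoke Lemma \ref{4.5} and then feed the factorization into the sufficient direction of Theorem \ref{43}$(1)$. The extra verifications you supply --- that $F_3F_4\neq 0$ (separability and irreducibility) so that Lemma \ref{4.5} actually applies, and that the roots $b_i\in k[y_1,\ldots,y_n]$ produced at the affine level lift, via integral closedness and the graded UFD structure of $k[X_1,\ldots,X_{n+1},T]$, to nonzero homogeneous linear forms as required by Theorem \ref{43}$(1)$ --- are details the paper's two-line proof passes over silently, and your arguments for them are sound.
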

\begin{proof}
By lemma \ref{4.5}, we have 
	there exist forms $B_1,B_2,B_3\in k[\X]$ such that
\[T^3 + F_2(\X)T + F_3(\X)=(T-B_1)(T-B_2)(T-B_3)\]
where $T$ is an indeterminate.
	By part $(1)$ of Theorem \ref{43},
	it follows that the point $P$ is an extendable outer Galois point for $\mac X$.  	
\end{proof}

\end{document}